\theoremstyle{plain}
\newtheorem{thm}{Theorem}[section]
\newtheorem{lem}[thm]{Lemma}
\newtheorem{pro}[thm]{Proposition}
\newtheorem{cor}[thm]{Corollary}
\theoremstyle{definition}
\newtheorem{exa}[thm]{Example}
\newtheorem{rem}[thm]{Remark}
\DeclareMathOperator{\Hom}{Hom}
\DeclareMathOperator{\Aut}{Aut}
\DeclareMathOperator{\End}{End}
\DeclareMathOperator{\proj}{proj}
\DeclareMathOperator{\modu}{mod}
\newcommand{\sym}{sym}
\newcommand{\sm}{\text{-}}
\DeclareMathOperator{\id}{id}
\DeclareMathOperator{\pdim}{proj.dim}
\newcommand{\oTo}{\xymatrix{ \ar@{^{(}->}[r]|{\mathbf{O}}& }} % open immersion
\newcommand{\cTo}{\xymatrix{ \ar@{^{(}->}[r]|{\mathbf{|}}& }} % closed immersion
\newcommand{\coTo}{\xymatrix{ \ar@{^{(}->}[r]|{\mathbf{O}}|{\mathbf{|}}& }} %locally closed
\DeclareMathOperator{\Bild}{Im}
\DeclareMathOperator{\kdual}{D}
\DeclareMathOperator{\Out}{Out}
\DeclareMathOperator{\Inn}{Inn}
\DeclareMathOperator{\DPic}{DPic}
\newcommand{\ep}{\varepsilon}
\newcommand{\si}{\sigma}
\newcommand{\la}{\lambda}
\newcommand{\La}{\Lambda}
\newcommand{\al}{\alpha}
\newcommand{\Z}{\mathbb{Z}}
\newcommand{\mcT}{\mathcal{T}}
\newcommand{\mcZ}{\mathcal{Z}}
\DeclareMathOperator{\add}{add}
\DeclareMathOperator{\tilt}{tilt}
\DeclareMathOperator{\silt}{silt}
\DeclareMathOperator{\Ka}{K}
\begin{document}

\title{On the tilting complexes for the Auslander algebra of the truncated polynomial ring}

\date{\today}
%\author{Jan Geuenich}
%\address{Jan Geuenich, Fakult\"at f\"ur Mathematik, Universit\"at Bielefeld, Postfach 100 131, D-33501 Bielefeld, Germany}
%\email{jgeuenich@math.uni-bielefeld.de}
%
%\author{Yuta Kimura}
%\address{Yuta Kimura, Fakult\"at f\"ur Mathematik, Universit\"at Bielefeld, Postfach 100 131, D-33501 Bielefeld, Germany}
%\email{ykimura@math.uni-bielefeld.de}
%
\author{Julia Sauter}
\address{Fakult\"at f\"ur Mathematik, Universit\"at Bielefeld,\\ Postfach 100 131, D-33501 Bielefeld, Germany,  \\ 
Phone : 0049 521 106 5036}
\email{jsauter@math.uni-bielefeld.de}
\subjclass[2010]{16G10,16E99}
\keywords{Auslander algebra, tilting complex, braid group, preprojective algebra}

\begin{abstract} 
We give a bijection between the tilting complexes in the bounded homotopy category of the Auslander algebra of $\rm{K[T]/T^n}$ and $\mathbb{Z} \times \rm{B}_n$ where $\rm{B}_n$ is the Artin braid goup of type $A$ with $n-1$ generators. The tilting complexes have mutation components parametrized by $\mathbb{Z}$ and each component has a natural faithful and transitive operation of $\rm{B}_n$. This also implies that the derived Picard group of this algebra is isomorphic to the direct product of its outer isomorphism group and $\mathbb{Z} \times \rm{B}_n$. This work is to be seen as a continuation of the work of Geuenich and an application of the work of Aihara and Mizuno on tilting complexes of preprojective algebras of Dynkin type. 

\end{abstract}
\maketitle

\section{Introduction}
Preprojective algebras and Auslander algebras are two of the most intensely studied classes of algebras in the representation theory of algebras with geometric applications to Lie and cluster theory, e.g. \cite{GP}, \cite{AIRT}, \cite{AIR15}, \cite{BIRS}, \cite{GLS06},\cite{GLS}, \cite{KS},\cite{Lu},\cite{GLS07}, \cite{BHRR} etc. In this article we focus on the two special cases preprojective algebras of Dynkin type $A$ and Auslander algebras of truncated polynomial rings. It has already been observed that these algebras are related, see for example \cite{RZ} and \cite[Theorem 1.3]{IZ}. We will show that tilting complexes for the Auslander algebra have a \emph{symmetric double} embedding into the tilting complexes of the preprojective algebra (see section \ref{symm-double}). Using this we can classify all tilting complexes for the Auslander algebra of the truncated polynomial ring.     
Let us introduce the algebras we are studying and then state our results. Let $n\geq 2$ be a natural number: 
\begin{itemize}
\item[(1)]
Let $\Pi_n :=\Pi (\mathbb{A}_n)$ be the \textbf{preprojective algebra of Dynkin type $A_n$}. It is given by the bound quiver with relations 
\[ 
\xymatrix{ 
1 \ar@<.3em>[r]^{\al_1} & 2 \ar@<.3em>[l]^{\beta_1}\ar@<.3em>[r]^{\al_2} & \ar@<.3em>[l]^{\beta_{2}}\cdots  \ar@<.3em>[r]^{\al_{n-2}} & n \! - \! 1 \ar@<.3em>[r]^{\al_{n-1}} \ar@<.3em>[l]^{\beta_{n-2}}& n \ar@<.3em>[l]^{\beta_{n-1}},
} \quad \quad 
\begin{aligned}
&\beta_1\al_1, \\
&\al_i\beta_i-\beta_{i+1}\al_{i+1}, \;  1 \leq i \leq n-2, \\
&\al_{n-1}\beta_{n-1}\end{aligned}
\]
We denote by $e_1, \ldots , e_{n}\in \Pi_n$ the complete set of primitive orthogonal idempotents corresponding to the vertices in the quiver and by $P_i := e_i \Pi_n$ the indecomposable projective right $\Pi_n$-module.    
\item[(2)] Let $\La_n =\rm{Aus} (K[T]/ (T^n)):= \End_{K[T]} (\bigoplus_{i=1}^n K[T]/(T^i))$ be the \textbf{Auslander algebra of $\rm{K [T]/T^n}$}. It is given by the bound quiver with relations 
\[ 
\xymatrix{ 
1 \ar@<.3em>[r]^{\al_1} & 2 \ar@<.3em>[l]^{\beta_1}\ar@<.3em>[r]^{\al_2} & \ar@<.3em>[l]^{\beta_{2}}\cdots  \ar@<.3em>[r]^{\al_{n-2}} & n \! - \! 1 \ar@<.3em>[r]^{\al_{n-1}} \ar@<.3em>[l]^{\beta_{n-2}}& n \ar@<.3em>[l]^{\beta_{n-1}},
} \quad \quad 
\begin{aligned}
&\beta_1\al_1, \\
&\al_i\beta_i-\beta_{i+1}\al_{i+1}, \;  1 \leq i \leq n-2
\end{aligned}
\]
We denote by $\ep_1, \ldots , \ep_n$ the complete set of primitive orthogonal idempotents corresponding to the vertices in the quiver and by $Q_i := e_i \La_n$ the indecomposable projective right $\La_n$-module.     
\end{itemize}

Furthermore, we define $\Pi_1:= \La_1=K$. Both families of algebras are well studied and have very different properties: 
\begin{itemize}
\item[(1)] 
The algebras $\Pi_n$ are finite-dimensional self-injective, their stable categories are equivalent to the cluster category of Dynkin type $A_n$. %Take $e_1 \in \Pi_n$, then we have $e_1\Pi_n e_1 =K$, $\Pi_n /(e_1) \cong \Pi_{n-1}$. \\
%Another interesting choice is the following: Take $e_n \in \Pi_{2n-1}$, then we have $e_n \Pi_{2n-1} e_n = \rm{K[T]/T^n}$ and $\Pi_{2n-1}/(e_n) = \Pi_{n-1}\times \Pi_{n-1}$. We observe that we get a recollement of module categories where all three involved algebras are self-injective. 
Furthermore, support $\tau$-tilting modules, $2$-term-tilting complexes and tilting complexes have been studied by Mizuno \cite{Mi} and Aihara-Mizuno \cite{AM}. %Furthermore, these algebras play an important role in cluster theory \cite{GLS}, \cite{L} and have geometric applications \cite[section 12.2]{Lu}, \cite{CB}. 
%Their representation spaces appear as $0$-fibres of the quiver graded moment map in type $A$ studied in \cite{CB} and as 
%nilpotent variety in the work of Lusztig \cite{Lu}, section 12.2. Furthermore, the subcategories $\cogen J$ for a basic injective module $J$ are Frobenius categories, their stable categories categorify cluster algebra structures on coordinate rings of partial flag varieties - cf. the work \cite{GLS}. In subsequent work cluster algebra structures of Richardson varieties (pairs of flags with fixed intersection types) has been found, \cite{L}.
\item[(2)]
The algebras $\La_n$ are finite-dimensional of global dimension and dominant dimension $2$ if $n\geq 2$. They have a unique quasi-hereditary structure which is induced from the inclusion of the projectives $Q_1\subset Q_2 \subset \cdots \subset Q_n$. %Take $\ep_1 \in \La_n$ we have $\ep_1\La_n\ep_1=K$ and $\La_n/(\ep_1)\cong \La_{n-1}$. %The associated recollement on the module category induces a recollement on the bounded derived categories and this is the recollement corresponding to the quasi-hereditary structure. 
We have $\La_n / (\ep_n)\cong \Pi_{n-1}$ and the associated recollement has been studied by \cite{RZ}. \\
Furthermore, in the work \cite{IZ} the support $\tau $-tilting modules and classical tilting modules are studied (and shown to be in bijection to support $\tau$-tilting modules of $\Pi_n$). In recent work of \cite{G} all tilting modules are studied. It is surprising that it has only finitely many. In  \cite{HP},  exceptional sequences in the module category are classified. %Furthermore, these algebras have geometric applications to so-called Richardson orbits \cite{BHRR}. 
%The work \cite{BHRR} gives a classification of $\Delta$-filtered rigid modules which is used to explicitly construct all  Richardson orbits in type $A$. 
\end{itemize}

%We prove first the following result as a consequence of the work \cite{AM}
%\begin{thm} (proof is incomplete...) All preprojective algebras of Dynkin type are silting-discrete. 
%\end{thm}
%As a consequence one obtains that $\Pi_n$ and $\La_n$ are silting-discrete for every $n$. 
Tilting and support $\tau$-tilting theory are one of the main research areas in representation theory of finite-dimensional algebras. The intriguing observation is that both families of algebras share very similar combinatorial parametrizations of tilting related modules or complexes (support-$\tau$-tilting modules, tilting modules, tilting complexes etc.) by Weyl groups or braid groups.  

For any finite-dimensional algebra $\La$ we denote by $\Ka^b(\La\!-\!\proj)$ the homotopy category of 
bounded finite-dimensional projective right $\La$-modules and by $\tilt \La$ (resp. $\silt \La$) the set of all isomorphism classes of basic tilting (resp. silting) complexes in $\Ka^b(\La\!-\!\proj)$. 
Furthermore, if $P$ is a basic presilting complex in $\Ka^b(\La\!-\!\proj)$, then we write $\tilt_P \La$ (resp. $\silt_P\La$) for the subset of tilting (resp. silting) complexes in $\tilt \La$ (resp. $\silt \La$) having $P$ as a summand. 

Recall that the artin braid group $B_n$ is generated by $s_1, \ldots , s_{n-1}$ subject to the braid relations (see for example \cite{KT}). We denote by $B_n^+$ the submonoid generated by $s_1, \ldots , s_{n-1}$. For $x,y \in B_n$ we define the right divisibility order $y \geq_L x $ by 
$yx^{-1}\in B_n^+$.

We show the following result for $\La_n$:
\begin{thm}  \label{main-thm}(cf. section \ref{main-step})
We have $\tilt \La_n = \bigsqcup_{m\in \mathbb{Z}} \silt_{Q_n[m]} \La_n$ and the subsets $\silt_{Q_n[m]} \La_n $ are precisely the mutation connected components. Furthermore, we have an explicit isomorphism of posets 
\[ \rho \colon B_n \to \silt_{Q_n[m]} \La_n, \quad x \mapsto T_x \]
where we consider $B_n$ with the right divisibility order $\geq_L$.
\end{thm}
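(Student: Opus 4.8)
The plan is to reduce the statement to the corresponding classification for the preprojective algebra $\Pi_n$, which (by the second bullet about $\Pi_n$) is already known from the work of Aihara–Mizuno: $\silt \Pi_n$ carries a simply transitive action of $B_n$ compatible with mutation and with the divisibility order. The bridge is the ``symmetric double embedding'' announced in Section \ref{symm-double}: a fully faithful functor (up to the appropriate idempotent truncation) relating $\Ka^b(\La_n\!-\!\proj)$ and $\Ka^b(\Pi_n\!-\!\proj)$ coming from the identification $\La_n/(\ep_n)\cong \Pi_{n-1}$ together with the quasi-hereditary structure $Q_1\subset \cdots \subset Q_n$. Concretely, I would first establish that every tilting complex $T$ for $\La_n$ has, after shifting, the indecomposable projective-injective summand $Q_n$ (in some degree $m$) as a direct summand — this is where global/dominant dimension $2$ and the fact that $Q_n$ is the unique projective-injective are used, via the standard argument that projective-injective summands are forced in tilting complexes over algebras of dominant dimension $\geq 1$ (cf. the domdim machinery in the references). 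This yields the disjoint decomposition $\tilt\La_n=\bigsqcup_{m\in\Z}\silt_{Q_n[m]}\La_n$, and the shift functor $[1]$ identifies these pieces, so it suffices to treat $m=0$.

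Next I would show $\silt_{Q_n}\La_n$ is, as a mutation-poset, isomorphic to $\silt\Pi_n$ (or to the ``positive part'' $\silt_{P_n}\Pi_n$, matching the symmetric double embedding picture in Section \ref{symm-double}). The key step is: a silting complex $T$ for $\La_n$ containing $Q_n$ corresponds bijectively, via the Bongartz-type completion / truncation along the heredity ideal $(\ep_n)$, to a silting complex for $\La_n/(\ep_n)\cong\Pi_{n-1}$ — and here one must invoke the explicit braid-group parametrization for $\Pi_n$ from Aihara–Mizuno rather than $\Pi_{n-1}$, because the symmetric double structure ``doubles'' the $A_{n-1}$-combinatorics of $\Pi_{n-1}$ into the $A_n$-combinatorics (the extra generator $s_{n-1}$ or $s_1$ coming from the doubling). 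I would make this precise by checking that mutation of $T$ at an indecomposable summand $\neq Q_n$ corresponds to mutation on the $\Pi_n$-side, so the mutation graphs are isomorphic; since $\silt\Pi_n$ is known to be mutation-connected with mutation graph $= B_n$ (Cayley-type graph for the generators $s_i$), the component $\silt_{Q_n}\La_n$ is connected and equals the whole of $\tilt_{Q_n}\La_n$ (no proper silting is non-tilting here because $\Pi_n$ is self-injective, forcing the complexes to be tilting).

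Finally, for the poset isomorphism $\rho\colon B_n\to\silt_{Q_n[m]}\La_n$: I would transport the Aihara–Mizuno parametrization $x\mapsto$ (a silting complex of $\Pi_n$ obtained by applying the sequence of derived reflection functors indexed by a reduced word for $x$ to $\Pi_n$) through the symmetric double embedding, defining $T_x$ as the corresponding $\La_n$-complex, and then check (i) well-definedness (independence of reduced word — inherited from the $\Pi_n$ side), (ii) bijectivity (inherited), and (iii) that $y\geq_L x\iff T_y\geq T_x$ in the silting order, i.e. $\Hom_{\Ka^b}(T_x,T_y[{>}0])=0$ and $T_x\in\thick$ built from $T_y$ appropriately — again translating the known statement for $\Pi_n$ that left divisibility matches the silting order on $\silt\Pi_n$. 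The main obstacle I expect is step two, namely pinning down the symmetric double embedding precisely enough that mutation and the silting partial order are transported faithfully (not merely the underlying sets) — in particular verifying that the ``doubling'' sends the $B_{n-1}$-action on $\silt\Pi_{n-1}$ to the full $B_n$-action, and controlling the degree-shift bookkeeping so that the $\Z$-grading of components on the $\La_n$-side is exactly matched. Everything else is bookkeeping on top of \cite{AM}.
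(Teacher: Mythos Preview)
Your plan has two genuine gaps and one systematic confusion.

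\textbf{Wrong preprojective algebra.} The symmetric double embedding of Section~\ref{symm-double} does not go into $\Pi_n$ (nor is it built from the quotient $\La_n/(\ep_n)\cong\Pi_{n-1}$). It realises $\La_n$ as $e\Pi_{2n-1}e$ for $e=\sum_{i=1}^n e_i$, and the map is $\Phi\colon \tilt_{Q_n}\La_n \to \tilt_{P_n}\Pi_{2n-1}$. The braid group $B_n$ appears not as the full Aihara--Mizuno parameter set for the target, but as the subgroup of the $\nu$-folded braid group $B_{\Delta^f}\subset B_{2n}$ generated by $t_i=s_is_{2n-i}$ for $i<n$. So your picture of ``doubling the $B_{n-1}$-action on $\silt\Pi_{n-1}$ to a $B_n$-action'' is not what happens; there is no passage from $B_{n-1}$ to $B_n$ anywhere in the argument.

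\textbf{Surjectivity of $\rho$ is not transported from Aihara--Mizuno.} The paper uses $\Phi$ only to prove that $\rho$ is \emph{injective}: one has a commutative triangle $B_n\xrightarrow{\rho}\tilt_{Q_n}\La_n\xrightarrow{\Phi}\tilt_{P_n}\Pi_{2n-1}$ whose composite $\mu$ is the restriction of Aihara--Mizuno's bijection, hence injective. Surjectivity of $\rho$ (equivalently, mutation-connectedness of $\silt_{Q_n}\La_n$) is obtained by an entirely different mechanism that your plan does not mention: Iyama--Yang silting reduction along $\thick(Q_n)$ gives a poset isomorphism $\silt_{Q_n}\La_n\cong\silt\mcT$ with $\mcT=\Ka^b(\La_n\!-\!\proj)/\thick(Q_n)$, and one then proves $\mcT$ is silting-discrete (using that every complex reached by mutation lies in $\Bild\rho$, has endomorphism ring $\La_n$ again by the two-sided tilting lift $\overline\rho$, and that $\La_n$ is $\tau$-tilting finite), hence strongly silting-connected by Aihara. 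Your proposed route --- show $\Phi$ is a bijection onto something and pull back the Aihara--Mizuno classification --- would require you to identify the image $\tilt^{e\text{-}\sym}$ inside $\tilt_{P_n}\Pi_{2n-1}$ independently, which the paper never does (and which is essentially equivalent to the theorem itself).

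\textbf{The $Q_n[m]$-summand step.} Your appeal to ``domdim machinery forcing projective-injective summands'' is the right intuition for tilting \emph{modules}, but for tilting \emph{complexes} it is not a standard off-the-shelf result. The paper proves this (Lemma~\ref{last-part-of-thm}) again via the embedding into $\Pi_{2n-1}$: for any tilting $P$ one finds summands $X,Y$ with $\ell(X)\cong\nu\ell(Y)$, forcing $X\in\Ka^b(\add Q_n)$, and then a short hands-on argument with the socle/top of $Q_n$ shows such an indecomposable complex must be a stalk $Q_n[m]$.
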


As a corollary we show that $\La_n$ has $2n!$ two-term tilting complexes, see Corollary \ref{two-term}. 
Furthermore, for $\La_n$ and $\Pi_n$ their derived equivalence class coincides with their isomorphism class as algebras which is a very rare and strong \emph{rigidity} property, cf. Corollary \ref{rigidity}. In fact, this is explained using a factorization $\overline{\rho}\colon B_n \to \DPic (\La_n), x \mapsto \mathbb{T}_x$ of the map $\rho$ over the derived Picard group (see \cite[Prop. 6.3]{G} or section \ref{braid-DPic}).  

\begin{thm} ($=$ Prop. \ref{derived-Pic} $+$Prop \ref{out-La})\label{Thm-DPic} The following map 
\[
\begin{aligned} \Out (\La_n)\times \mathbb{Z} \times B_n &\to \DPic ( \La_n)  \\
(\phi , m , x) &\mapsto ({}_{\phi^{-1}}\La_n \otimes_{\La_n}^{L}\mathbb{T}_x)[m]
\end{aligned}
\]
is an isomorphism of groups. Furthermore, we have $\Out (\La_n)$ is isomorphic to $\Aut_{K-alg} (K[T]/(T^n))$. 
\end{thm}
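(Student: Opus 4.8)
The plan is to establish the isomorphism in two stages: first identify $\Out(\La_n)$ with $\Aut_{K\text{-alg}}(K[T]/(T^n))$, and then show that the displayed map is a group isomorphism by combining the braid-group parametrization of Theorem~\ref{main-thm} with the general structure theory of derived Picard groups of algebras whose tilting complexes are well understood. For the first stage, I would recall that $\Out(\La_n) = \Aut(\La_n)/\Inn(\La_n)$, and that any $K$-algebra automorphism of $\La_n$ permutes the primitive idempotents up to inner automorphism; since the quiver of $\La_n$ has no symmetry (the two ends of the $A_n$-shaped double quiver are distinguished: vertex $n$ carries only the relation-free commutativity, vertex $1$ carries the zero relation $\beta_1\al_1$), every automorphism fixes each vertex, so it is determined by its action on the arrows. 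One then checks directly that rescaling the arrows realizes exactly the automorphisms, modulo inner ones, that come from automorphisms of the base ring $K[T]/(T^n)$ — concretely, $\La_n = \End_{K[T]}(\bigoplus_i K[T]/(T^i))$, and any $\sigma \in \Aut_{K\text{-alg}}(K[T]/(T^n))$ induces compatible automorphisms of all the $K[T]/(T^i)$ and hence of $\La_n$; conversely the map $\Aut_{K\text{-alg}}(K[T]/(T^n)) \to \Out(\La_n)$ so obtained is injective (no nontrivial such $\sigma$ becomes inner) and surjective (by the arrow-rescaling count). This gives Prop.~\ref{out-La}.

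For the second stage, the key structural input is the exact sequence, valid for any finite-dimensional algebra $\La$ with a "rigid enough" silting theory,
\[
1 \to \Out(\La) \to \DPic(\La) \to \mathrm{Aut}_{\mathrm{mut}}\bigl(\silt \La\bigr),
\]
where the right-hand object records the action of two-sided tilting complexes on the silting quiver / poset; here $\Out(\La)$ sits inside $\DPic(\La)$ as the subgroup of complexes concentrated in degree $0$ (invertible bimodules), and the shift functor $[1]$ contributes a central copy of $\mathbb{Z}$. The map $\overline{\rho}\colon B_n \to \DPic(\La_n)$, $x \mapsto \mathbb{T}_x$, is the lift of $\rho$ already produced in \cite[Prop.~6.3]{G} or section~\ref{braid-DPic}; by construction $\mathbb{T}_x \otimes^L_{\La_n} \La_n \simeq T_x$, so $\overline{\rho}$ covers the poset isomorphism $\rho$ of Theorem~\ref{main-thm}. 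Because $\rho$ is a \emph{bijection} onto a single mutation component $\silt_{Q_n[m]}\La_n$ and the components are indexed by $m \in \mathbb{Z}$ via the shift, the composite map $\Out(\La_n)\times \mathbb{Z} \times B_n \to \DPic(\La_n)$ is forced to be surjective: given any two-sided tilting complex $\mathbb{T}$, its image $\mathbb{T}\otimes^L_{\La_n}\La_n$ is a silting complex, lies in $\silt_{Q_n[m]}\La_n$ for a unique $m$, equals $T_x[m]$ for a unique $x$ (using $\rho$ and the shift), and then $\mathbb{T}\otimes (\mathbb{T}_x[m])^{-1}$ maps $\La_n$ to $\La_n$, hence is an invertible bimodule, i.e.\ lies in $\Out(\La_n)$. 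Injectivity follows because the only way a triple $(\phi,m,x)$ maps to the identity is to have $m=0$ (compare shifts), $x=1$ (apply $\rho^{-1}$, which is injective, to the induced silting complex), and then $\phi = \mathrm{id}$ in $\Out(\La_n)$. The group-homomorphism property reduces to checking that $\mathbb{Z}$ and $B_n$ act on each other trivially and that $\Out(\La_n)$ normalizes the image of $\overline{\rho}$ trivially as well; the $B_n$-multiplicativity $\mathbb{T}_x \otimes^L \mathbb{T}_y \simeq \mathbb{T}_{xy}$ (or $\mathbb{T}_{yx}$, depending on the handedness convention, which must be pinned down consistently with $\geq_L$) is exactly the statement that $\overline{\rho}$ is a monoid/group homomorphism, which I would take from \cite[Prop.~6.3]{G}, extended from $B_n^+$ to $B_n$ by the faithful transitive action in Theorem~\ref{main-thm}.

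The main obstacle I anticipate is the surjectivity argument: it needs that \emph{every} standard two-sided tilting complex of $\La_n$ arises from the braid-group family up to shift and bimodule twist, and this in turn depends on knowing that the mutation components of $\silt \La_n$ are exactly the $\silt_{Q_n[m]}\La_n$ and that $\DPic$ acts transitively on silting complexes within a component with stabilizer $\Out$ — the transitivity statement is the nontrivial part and is precisely where Theorem~\ref{main-thm}'s assertion that $B_n$ acts \emph{faithfully and transitively} on each component does the work, combined with the general fact (Rickard/Keller) that derived equivalences between $\La_n$ and itself send $\La_n$ to some silting complex and are classified by where they send it together with an $\Out$-ambiguity. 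A secondary, more bookkeeping, difficulty is keeping the left/right conventions for $\otimes^L$, the opposite-algebra twist $_{\phi^{-1}}\La_n$, and the order $\geq_L$ versus $\geq_R$ mutually consistent so that the map is a homomorphism on the nose rather than an anti-homomorphism; I would fix conventions once at the start of section~\ref{braid-DPic} and check the one relation $\mathbb{T}_{s_i}\otimes^L \mathbb{T}_{s_j} \simeq \mathbb{T}_{s_j}\otimes^L\mathbb{T}_{s_i}$ for $|i-j|\geq 2$ and the braid relation for $|i-j|=1$ explicitly, everything else following formally.
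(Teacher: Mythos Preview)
Your overall strategy matches the paper's. For $\Out(\La_n) \cong \Aut_{K\text{-alg}}(K[T]/(T^n))$ the paper carries out essentially the program you sketch, but through a concrete subgroup $H = \{g \in \Aut(\La_n) \mid g(e_i)=e_i,\ g(\alpha_j)=\alpha_j\}$: it shows $\Aut(\La_n) = H \cdot \Inn(\La_n)$ by conjugating an arbitrary automorphism to fix the $e_i$ and the $\alpha_j$, builds an explicit isomorphism $\Aut(K[T]/(T^n)) \to H$ sending $g$ with $g(T)=\sum_j s_jT^j$ to the automorphism $\beta_i \mapsto \sum_j s_j(\beta_i\alpha_i)^{j-1}\beta_i$, and checks $H \cap \Inn(\La_n)=\{\id\}$ by a short centrality computation. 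For the $\DPic$ half, injectivity and surjectivity go exactly as you propose; the paper cites \cite[Prop.~2.3]{RZ03} for the fibers of the forgetful map $\DPic(\La_n)\to\tilt\La_n$ rather than your exact-sequence phrasing, but the content is the same.

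The one step you name but do not argue is why the product is \emph{direct} rather than semidirect, i.e.\ why $\Out(\La_n)$ commutes with the image of $\overline{\rho}$ inside $\DPic(\La_n)$. This is not formal: for $\Pi_{2n-1}$ Mizuno's analogous result is a genuine semidirect product $\Out(\Pi_{2n-1})\ltimes B_{\Delta^f}$. The paper isolates this as a separate lemma, proving ${}_{\phi}(\mathbb{T}_x)_{\phi} \cong \mathbb{T}_x$ for all $\phi\in\Out(\La_n)$ and $x\in B_n$, and the argument \emph{uses the first proposition}: every outer class has a representative fixing all $e_i$, hence fixing each ideal $I_i = \La_n(1-e_i)\La_n$; since the generators $\mathbb{T}_{s_i}$ are built from these ideals one gets ${}_{\phi}(\mathbb{T}_{s_i})_{\phi}\cong\mathbb{T}_{s_i}$ and then inducts on word length in $B_n$. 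So the two halves of the theorem are not independent, and your sketch should supply this link rather than list it alongside the convention bookkeeping.
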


\section{On the symmetric doubling of tilting complexes} \label{symm-double}

Let $\Pi$ be a basic finite-dimensional self-injective algebra with Nakayama automorphism $\nu_{\Pi}$ of order two, i.e. $\nu_{\Pi}^2=\id$. This induces an involution $\nu$ on the set of isomorphism classes of indecomposable projective $\Pi$-modules.
Let $e \in \Pi$ be an idempotent such that $e\Pi $ is a direct sum of a complete set of representatives for the $\nu$-orbits in the set of indecomposable projective (right) $\Pi$-modules. 
Let $\Pi^{\nu}$ be the maximal basic summandwise $\nu$-invariant projective (right) $\Pi$-module, so by assumption $\Pi^{\nu} \in \add e\Pi $ (but it is possible that $\Pi^{\nu}=0$).  \\
We set $\La := e\Pi e$. 
From the assumption on $e$ we conclude $\lvert \Pi \rvert + \lvert \Pi^{\nu} \rvert = 2\lvert e\Pi  \rvert =  2 \lvert \La \rvert$. 

By $\La\!-\!\modu$ and $\Pi\!-\!\modu$ we denote the categories of finite-dimensional right $\La$- and $\Pi$-modules. 
The functors $\ell = - \otimes_{\La} e\Pi , r=\Hom_{\La}(\Pi e ,-) \colon \La\!-\!\modu \to \Pi\!-\!\modu$ are fully faithful, $\ell$ preserves projectives and $r$ preserves injectives. They are left and right adjoint functors of the restriction functor $e$ which maps $X \mapsto Xe$.

That means we get two induced functors (which we denote by the same letter) 
\[
\ell \colon \Ka^b(\La\!-\!\proj )\to \Ka^b (\Pi\!-\!\proj), \quad r\colon \Ka^b(\La \!-\!{\rm inj} )\to \Ka^b (\Pi\!-\!\proj)   \]
Both functors are fully faithful, compatible with shift and preserve triangles. 
Furthermore, we have the Nakayama functors defined componentwise on the complexes, denoted by $\nu_{\Pi}$ and $\nu_{\La}$. 
For every idempotent $e\in \Pi$ and $\La, r, \ell$ defined as before we observe $r \nu_{\La}(\La) = r(\kdual \La)= \kdual (\Pi e) = \nu_{\Pi} (e\Pi ) = \nu_{\Pi} \ell (\La)$ and this implies the identity 
\[  \nu_{\Pi} \circ \ell = r \circ \nu_{\La} \colon \Ka^b(\La \!-\!\proj )\to \Ka^b (\Pi \!-\!\proj) . \]
By definition of $e$ we conclude 
$\add \ell (\La) \cap \add r(\kdual \La) = \add (e\Pi)  \cap \add (\nu_{\Pi} e\Pi ) = \add (\Pi^{\nu}) $, this also implies $\Pi^{\nu}= re (\Pi^{\nu}) = \ell e (\Pi^{\nu})$, and we conclude $\add (\Pi) =\add (e\Pi  \oplus \nu_{\Pi} (e\Pi )) = \add (\ell (\La) \oplus r(\kdual \La))$, in particular: 
\[ \Pi \cong \ell(\La ) \oplus \left(r(\kdual \La )  / \Pi^{\nu} \right) \] 
where $(-)/\Pi^{\nu}$ means that we take a complement of the summand $\Pi^{\nu}$.  
Let $Q=\Pi^{\nu}e$, then $Q$ is a projective-injective $\La$-module with 
$\ell (Q) =\Pi^{\nu} = r(Q)$ implying it is summandwise $\nu_{\La}$-invariant (i.e. top and socle are the same for every indecomposable summand of $Q$). \\

We also define 

\[\tilt^{e-\sym} :=\{ T\in \tilt_{\Pi^{\nu}} \Pi  \mid \add (T)= \add (\ell (P)\oplus \nu \ell (P)) \text{ for some }P\in \tilt \La \} \] 

\begin{lem} \label{sym}
The map $\Phi\colon \tilt \La  \to \tilt \Pi $ mapping $P$ to a basic module $T$ with $\add (T) = \add (\ell(P) \oplus \nu \ell(P))$ is compatible with mutation of tilting complexes. It restricts to a bijection (still denoted by $\Phi$)
\[ 
\Phi \colon \tilt_Q \La \to \tilt^{e-\sym}
\]
\end{lem}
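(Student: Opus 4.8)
The plan is to use the fact that $\ell\colon \Ka^b(\La\!-\!\proj)\to\Ka^b(\Pi\!-\!\proj)$ is fully faithful, compatible with shift and triangles, and preserves projectives, so it carries presilting/tilting complexes to presilting complexes; likewise $\nu_\Pi\ell$ does, since $\nu_\Pi$ is an autoequivalence. The key identities established just before the lemma are $\add\ell(\La)\cap\add r(\kdual\La)=\add\Pi^\nu$ and $\add\Pi=\add(\ell(\La)\oplus\nu_\Pi\ell(\La))$; together with $\nu_\Pi\ell=r\nu_\La$ these say that $\ell$ and $\nu_\Pi\ell$ together "see" all indecomposable projectives of $\Pi$, overlapping exactly on $\add\Pi^\nu$. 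First I would verify that for $P\in\tilt\La$ the complex $\ell(P)\oplus\nu_\Pi\ell(P)$ is presilting: $\Hom(\ell P,\ell P[k])=0=\Hom(\nu_\Pi\ell P,\nu_\Pi\ell P[k])$ for $k\neq 0$ by full faithfulness, and the cross terms $\Hom(\ell P,\nu_\Pi\ell P[k])$ vanish because $\nu_\Pi\ell P=r\nu_\La P$ and one can compute $\Hom_{\Ka^b(\Pi\text{-}\proj)}(\ell X,rY[k])\cong\Hom_{\Ka^b(\La)}(X,Y[k])$ for $X,Y\in\Ka^b(\La\text{-}\proj)$ via the adjunction between $\ell$, $e$, $r$ (here using that $eY[k]$ recovers $Y[k]$ since $e\ell=\id$). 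Then I would count: the number of indecomposable summands of a basic module with $\add=\add(\ell P\oplus\nu_\Pi\ell P)$ is $2|\La|-|\Pi^\nu|=|\Pi|$ by the displayed identity, so a presilting complex with $|\Pi|$ summands over the self-injective (hence $\silt=\tilt$, and all silting complexes have exactly $|\Pi|$ summands) algebra $\Pi$ is automatically a tilting complex. This gives the map $\Phi\colon\tilt\La\to\tilt\Pi$, and by construction $\ell P\in\add(\Pi^\nu)^{\perp\text{-complement}}$... more precisely $\Phi(P)$ always contains $\Pi^\nu$ as a summand (since $\Pi^\nu=\ell e(\Pi^\nu)\in\add\ell(\La)$ forces $\Pi^\nu\in\add\Phi(\La)$, and compatibility with mutation propagates this), so $\Phi$ lands in $\tilt_{\Pi^\nu}\Pi$, and its image is by definition $\tilt^{e\text{-}\sym}$.

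For compatibility with mutation I would argue as follows. Left mutation of a silting complex $P=P'\oplus P_i$ at the indecomposable summand $P_i$ is given by a triangle $P_i\to E\to \mu_{P_i}^L(P)\to P_i[1]$ where $P_i\to E$ is a minimal left $\add P'$-approximation; since $\ell$ preserves triangles, minimal approximations (being fully faithful preserves the approximation and minimality properties), and indecomposables, $\ell$ sends this to the mutation triangle for $\ell P$ at $\ell P_i$ inside $\silt_{?}\Pi$. Applying $\nu_\Pi$ to the same triangle gives the mutation triangle for $\nu_\Pi\ell P$ at $\nu_\Pi\ell P_i$. Taking the direct sum of the $\ell$-side and the $\nu_\Pi\ell$-side triangles realizes the mutation of $\Phi(P)$ at the two summands $\ell P_i$ and $\nu_\Pi\ell P_i$ simultaneously; when $P_i\in\add Q$ these two summands coincide (as $\ell Q=\Pi^\nu=rQ=\nu_\La$-invariant), and in general one gets $\Phi(\mu^L_{P_i}(P))=\mu^L_{\nu_\Pi\ell P_i}\mu^L_{\ell P_i}(\Phi(P))$, i.e. $\Phi$ intertwines single mutation with a (possibly double) mutation on the $\Pi$ side. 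This is exactly the "symmetric doubling" compatibility asserted.

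Finally, restricting to $\tilt_Q\La$: if $Q$ is a summand of $P$, then $\ell Q=\nu_\Pi\ell Q=\Pi^\nu$ is a summand of $\Phi(P)$, so $\Phi$ maps $\tilt_Q\La$ into $\tilt_{\Pi^\nu}\Pi\cap\tilt^{e\text{-}\sym}=\tilt^{e\text{-}\sym}$. Injectivity of $\Phi$ on $\tilt_Q\La$ is immediate from full faithfulness of $\ell$: from $\add\Phi(P)=\add\Phi(P')$ one recovers $\add\ell(P)=\add\ell(P')$ (intersect with the complement of $\add\Pi^\nu$, which by the identities is $\add\ell(\La)/\Pi^\nu$; and the $\Pi^\nu$-parts match because $Q\mid P,P'$ forces $\ell(\text{the }Q\text{-part})$ to account for exactly $\Pi^\nu$), hence $P\cong P'$. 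Surjectivity onto $\tilt^{e\text{-}\sym}$ is built into the definition of the target set: every element of $\tilt^{e\text{-}\sym}$ is by definition $\add(\ell P\oplus\nu\ell P)$ for some $P\in\tilt\La$, and such a $P$ must contain $Q$ as a summand — because the given $T\in\tilt_{\Pi^\nu}\Pi$ contains $\Pi^\nu$, and $\Pi^\nu\in\add\ell P$ with $\Pi^\nu=\ell e(\Pi^\nu)$ pulls back under the fully faithful $\ell$ to force $Q=\Pi^\nu e\in\add P$. The main obstacle I anticipate is the mutation-compatibility step, specifically checking that a minimal $\add P'$-approximation stays minimal after applying the fully faithful functors $\ell$ and $\nu_\Pi\ell$ and that the direct-sum triangle is indeed the iterated mutation triangle — this requires care with left versus right approximations and with the case $P_i\in\add Q$ where the "double" mutation degenerates to a single one; everything else is bookkeeping with the adjunction identities $e\ell=\id=er$ and $\nu_\Pi\ell=r\nu_\La$ already recorded in the text.
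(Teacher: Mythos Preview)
Your outline has the right shape, but the well-definedness step has a genuine gap, and the missing tool --- the Serre-functor property of $\nu_\Pi$ on $\Ka^b(\Pi\!-\!\proj)$, available because $\Pi$ is self-injective --- is exactly what also resolves the mutation obstacle you flag. For the cross terms in the presilting check, your adjunction reduces $\Hom_\Pi(\ell P,\nu_\Pi\ell P[k])$ to $\Hom_\La(P,\nu_\La P[k])$, but you give no reason this vanishes; it does not follow from $P$ being tilting unless $\nu_\La$ is itself a Serre functor on $\Ka^b(\La\!-\!\proj)$, which is not assumed in this generality. The paper instead works on the $\Pi$ side: $\Hom(\ell P,\nu_\Pi\ell P[i])\cong \kdual\Hom(\ell P[i],\ell P)=\kdual\Hom(P,P[-i])=0$, and the remaining cross term follows from $\nu_\Pi^2\cong\id$. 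The same computation handles the mutation step: to see that the direct-sum map is a left $\add(\ell U\oplus\nu_\Pi\ell U)$-approximation one must check $\Hom(\ell Y\oplus\nu_\Pi\ell Y,(\ell U\oplus\nu_\Pi\ell U)[1])=0$, and the cross terms vanish by Serre duality for $\nu_\Pi$ together with the fact that $U\oplus Y$ is \emph{tilting}, not merely silting.

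More seriously, for the generating condition you replace thick generation by a counting argument (``presilting with $|\Pi|$ summands is automatically tilting''). This is problematic on two counts. First, the identity $2|\La|-|\Pi^\nu|=|\Pi|$ is the summand count only for $P=\La$; for arbitrary $P$ you have not determined the overlap $\add\ell(P)\cap\add\nu_\Pi\ell(P)$. Second, ``presilting with the correct number of summands implies silting'' is a Bongartz-type completion statement that is not available for general derived categories and would itself require proof here. The paper bypasses this entirely by proving thick generation directly: $\La\in\thick(P)$ and the fact that $\ell$ preserves triangles give $\ell(\La)\in\thick(\ell(P))$; applying $\nu_\Pi$ gives $\nu_\Pi\ell(\La)\in\thick(\nu_\Pi\ell(P))$; hence $\Pi$, being a summand of $\ell(\La)\oplus\nu_\Pi\ell(\La)$, lies in $\thick(\ell(P)\oplus\nu_\Pi\ell(P))$. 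Your remarks on injectivity and surjectivity of the restricted $\Phi$ match the paper's; injectivity is phrased there as $\add\ell(P)=\add(T)\cap\Bild\ell$.
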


\begin{proof}
\begin{itemize}
\item[(*)] We first prove that $\Phi$ is well-defined. 
Let $i \in\Z, i\neq 0$. In this proof we use the following shorthand $(X,Y):= \Hom_{K^b} (X,Y)$ and $\nu=\nu_{\Pi}$. Using that $\ell , \nu\ell$ are fully faithful we have that $(\ell(P)\oplus \nu \ell (P), \ell(P) [i]\oplus \nu \ell (P) [i] ) $ equals 
\[(P,P[i])\; \; \oplus \;\; (\ell(P) , \nu \ell (P) [i]) \; \; \oplus \; \; (\nu \ell(P) , \ell (P) [i])\; \;  \oplus \; \; (P,P[i])\]
Since $P$ is a tilting complex, we conclude $(P,P[i]) =0$. Since $\nu$ is a Serre functor  on $ \Ka^b (\Pi\!-\!\proj)$ and $P$ tilting we conclude 
\[
(\ell(P) , \nu \ell (P) [i]) \cong \kdual ( \ell (P)[i], \ell (P) ) =\kdual (P,P[-i]) =0
\]
Since $\nu^2 \cong \id$ we have  $(\nu \ell(P) , \ell (P) [i])  \cong ( \ell (P), \nu \ell (P)[i]) =0$. \\
Next we want to see that $\Pi $ is in the thick subcategory generated by $\ell (P) \oplus \nu \ell (P)$.  
Since $\ell $ maps triangles to triangles and $\La $ is in the thick subcategory generated by $P$ we have that: 
$\ell (\La) $ is in the thick subcategory generated by $\ell (P)$. Furthermore $r (\kdual \La) = r (\nu \La ) = \nu \ell (\La ) $ is in the thick subcategory generated by $\nu \ell (P)$. This implies $\Pi$ which is a summand of $\ell (\La) \oplus r (\kdual \La) $ is in the thick subcategory generated by $\ell (P) \oplus \nu \ell (P)$. 
\item[(*)] We now explain that it is compatible with mutation of summands: 
Assume $X\oplus U, Y\oplus U$ are tilting complexes in $\Ka^b(\La\!-\!\proj )$, $X,Y$ are summands and we have a triangle $X\xrightarrow{f} U_0\to Y \to X[1]$ with $f$ a minimal left $\add(U)$-approximation. The functors  
$\ell, \nu \ell$ map triangles to triangles. Taking the direct sum we get a triangle 
\[
\ell(X) \oplus \nu \ell (X) \xrightarrow{g} \ell(U_0) \oplus \nu \ell (U_0)\to \ell(Y) \oplus \nu \ell (Y) \to \ell(X)[1] \oplus \nu \ell (X)[1]
\]
We would like to see that $g$ is a left $\add(\ell(U) \oplus \nu \ell (U))$-approximation. 
So we apply $(-, \ell(U) \oplus \nu \ell (U))$ to the triangle and get an exact sequence of vector spaces - to fit into one line we  here we define $L:=\ell \oplus \nu \ell $ 
\[ (L (Y), L (U)) \to (L (U_0) , L (U)) \to (L (X), L (U)) \to (L (Y)[-1], L (U)) \]
Now, the last term equals 
\[ (\ell\oplus \nu \ell (Y), \ell\oplus \nu \ell (U)[1]) = (Y, U[1]) \oplus (\ell (Y), \nu \ell (U)[1]) \oplus (\nu \ell (Y), \ell (U)[1]) \oplus (Y, U[1]) \]
Since $U\oplus Y$ is silting we have $(Y, U[1])=0$. Since it is even tilting by assumption  we get $(\ell (Y), \nu \ell (U)[1]) = \kdual ( U[1], Y) = \kdual (U, Y[-1])=0$ and 
$(\nu \ell (Y), \ell (U)[1]) = (\ell (Y) , \nu \ell (U) [1] ) = 0$. This proves that $g$ is a left $\add(\ell(U) \oplus \nu \ell (U))$-approximation.
\item[(*)] $\tilt^{e-sym}$ is the image of the restriction of $\Phi$ by definition. It is clear that the restriction is injective because for $P \in \tilt_Q \La$ we have that $T:=\ell \oplus \nu \ell (P)$ fulfills $\add \ell (P) = \add (T) \cap \Bild \ell $. 
\end{itemize}
\end{proof}

\begin{exa}
Let $\Pi$ be a preprojective algebra of Dynkin type, the Nakayama automorphism has order $2$, except for ${\rm char} (K)=2$ and Dynkin type $\mathbb{D}_{2n} (n \geq 2 )$, $\mathbb{E}_7$ or $\mathbb{E}_8$, cf. \cite{BBK}. But for a field of arbitrary characteristic and all of the previous Dynkin types the preprojective algebra is weakly symmetric, i.e. its Nakayama permutation ( $=$ the automorphism on the Grothendieck group) is the identity. 
For weakly symmetric algebras, the previously constructed algebra $\La$ equals $\Pi$. So nontrivial examples of the previous map $\Phi$ are the remaining Dynkin types $\mathbb{A}_n (n\geq 2),\mathbb{D}_{2n+1} (n \geq 2 )$ or $\mathbb{E}_6$. 
\end{exa}

\subsection{In the special case of the preprojective algebra of type $\mathbb{A}_{2n-1}$}
Let $e=e_J \in \Pi_{2n-1}$ be of the form $e= e_n + \sum_{i\in J} e_i + \sum_{i\in J^c} e_{2n-i} $ for some subset $J \subset \{1, \ldots , n-1\}$ (by varying the $J$ we obtain all $2^{n-1}$ possible choices of idempotents $e \in \Pi=\Pi_{2n-1}$ as considered in the previous section). Set $\La=e\Pi_{2n-1}e$, for $e= \sum_{i=1}^ne_i\in \Pi_{2n-1}$ we observe $\La\cong \La_n$. 
We fix primitive orthogonal idempotents for the algebra $\La$:  $\epsilon_n$, $\epsilon_i, i\in J$, $\epsilon_{2n-i}, i \in J^c=\{1, \ldots, n-1\}\setminus J$ such that $P_ie=e\Pi_{2n-1} e_i =  \epsilon_i\La=: Q_i$ for $i\in \{n\} \cup J \cup (2n-J^c)$. 

Recall the map 
\[ 
\Phi =\Phi_J\colon \tilt_{Q_n} \La \to \tilt_{P_n} \Pi_{2n-1}
\]
maps a tilting complex $P$ to a basic complex $T$ with $\add (T) = \add (\ell(P) \oplus \nu \ell(P))$.  We have the following observation in this situation (using that the module $Q=Q_n$ is indecomposable in this case). 
\begin{lem} \label{last-part-of-thm}
Every $P$ in $\tilt \La $ has a summand of the form $Q_n[j]$ for some $j \in \Z$. 
\end{lem}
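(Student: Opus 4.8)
The plan is to derive this from the comparison map $\Phi$ of Lemma~\ref{sym}. Instantiate the construction of Section~\ref{symm-double} with $\Pi=\Pi_{2n-1}$ and $e=\sum_{i=1}^n e_i$, so that $\La=e\Pi e\cong\La_n$ and $Q=\Pi^{\nu}e=Q_n$; here $\Pi^{\nu}=P_n$ since $n$ is the unique $\nu_{\Pi}$-fixed vertex of $\mathbb{A}_{2n-1}$, and $\nu_\Pi$ has order two by \cite{BBK}. Given $P\in\tilt\La_n$, the well-definedness part of the proof of Lemma~\ref{sym} uses only that $P$ is tilting, so $T:=\Phi(P)$ is a basic tilting complex of $\Pi$ with $\add T=\add(\ell(P)\oplus\nu_{\Pi}\ell(P))$. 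Since $\ell$ and $\nu_{\Pi}\ell$ are fully faithful, $\ell(P)$ and $\nu_{\Pi}\ell(P)$ are each basic with exactly $n$ indecomposable summands, while a basic tilting complex of $\Pi_{2n-1}$ has $2n-1$ indecomposable summands; hence $\add(\ell P)\cap\add(\nu_{\Pi}\ell P)$ consists of a single indecomposable $Y$. Applying the autoequivalence $\nu_{\Pi}$ (with $\nu_{\Pi}^2\cong\id$) to this one-element intersection shows $\nu_{\Pi}(Y)\cong Y$. Writing $Y\cong\ell(P_0)$ for the corresponding indecomposable summand $P_0$ of $P$, we have $P_0=e(Y)$ because $e\ell\cong\id$, so it suffices to prove $Y\cong P_n[j]$ for some $j\in\mathbb{Z}$, since then $P_0\cong e(P_n[j])=Q_n[j]$.

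Next I would pin down the minimal form of $Y$ in $\Ka^b(\Pi-\proj)$. As $P_0$ is an indecomposable summand of a tilting complex, $\Hom_{\Ka^b}(P_0,P_0[i])=0$ for all $i\neq0$; taking $P_0$ to be a minimal complex of projectives and applying $\ell$ termwise (a fully faithful additive functor carries minimal complexes to minimal complexes) exhibits $Y$ as a minimal, indecomposable, exceptional complex with terms in $\add(\ell\La_n)=\add(P_1\oplus\cdots\oplus P_n)$. On the other hand $\nu_{\Pi}$ sends $P_i$ to $P_{2n-i}$, so $\nu_{\Pi}(Y)\cong Y$ admits a minimal representative with terms in $\add(P_n\oplus\cdots\oplus P_{2n-1})$. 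Since the minimal representative of an object of $\Ka^b(\Pi-\proj)$ is unique up to isomorphism of complexes, the minimal form of $Y$ has all its terms in $\add(P_1\oplus\cdots\oplus P_n)\cap\add(P_n\oplus\cdots\oplus P_{2n-1})=\add(P_n)$.

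Finally, $\add_{\Pi}(P_n)$ is equivalent to $\proj\big(\End_{\Pi}(P_n)\big)$, and $\End_{\Pi}(P_n)=e_n\Pi e_n\cong K[T]/(T^n)$ (this equals $\epsilon_n\La_n\epsilon_n=\End_{K[T]}(K[T]/(T^n))$; compare \cite{RZ}, \cite{IZ}). Thus $Y$ corresponds to an indecomposable exceptional object of $\Ka^b(\proj K[T]/(T^n))$, and as $K[T]/(T^n)$ is local this forces $Y$ to be a shift of the free module of rank one: if a minimal indecomposable complex $X$ over a local algebra had terms in degrees $[a,b]$ with $a<b$, then composing the canonical surjection $X\twoheadrightarrow\Top(X^b)[-b]$, a nonzero map of shifted semisimples $\Top(X^b)[-b]\to\soc(X^a)[-b]$ (both nonzero since $n\geq2$), and the inclusion $\soc(X^a)[-a][a-b]\hookrightarrow X[a-b]$ would produce an element of $\Hom(X,X[a-b])$ that is nonzero on $H^b$ (using minimality of $X$), contradicting exceptionality; hence $a=b$, and indecomposability forces rank one. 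So $Y\cong P_n[j]$ and therefore $P_0\cong Q_n[j]$. I expect the main obstacle to be the middle step, namely checking that $\ell$ preserves minimality and that the minimal form of $Y$ is thereby pinned down to $\add(P_n)$; the counting in the first paragraph and the local-algebra computation in the last are routine.
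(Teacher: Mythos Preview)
Your proof is correct and follows the same route as the paper: count summands to locate an indecomposable in $\add(\ell P)\cap\add(\nu_{\Pi}\ell P)$, show it lies in $\Ka^b(\add P_n)$ (you argue via $\nu_{\Pi}Y\cong Y$ and uniqueness of minimal representatives, the paper via the identification $\Bild\ell\cap\Bild r=\Ka^b(\add P_n)$, which amounts to the same thing), and then run the identical top--socle argument over the local ring $\End(P_n)\cong K[T]/(T^n)$ to force a shifted stalk. The only discrepancy is that the paper states and proves the lemma for arbitrary $J\subset\{1,\dots,n-1\}$ while you specialize to $J=\{1,\dots,n-1\}$; your argument adapts verbatim once $\add(\ell\La)$ is replaced by the general $\add\big(P_n\oplus\bigoplus_{i\in J}P_i\oplus\bigoplus_{i\in J^c}P_{2n-i}\big)$.
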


\begin{proof}
Let $P$ be in $\tilt \Ka^b (\La \!-\!\proj)$. Since $\ell (P)\oplus \nu \ell (P)$ has $2n$ summands there exist indecomposable summands $X,Y$ of $P$ such that $\ell (X) \cong \nu \ell(Y)= r\nu (Y)$. By definition we have $\Bild \ell = \Ka^b (\add (P_n \oplus \bigoplus_{i\in J} P_i\oplus \bigoplus_{i\in J^c}P_{2n-i}))$ and $\Bild r = \Ka ^b (\add(P_n \oplus \bigoplus_{i\in J} P_{2n-i}\oplus \bigoplus_{i\in J^c}P_{i}))$ and it follows $\Bild \ell \cap \Bild r = \Ka^b(\add P_n)$. This implies that $X$ is in $\Ka^b(\add Q_n)$. 
Let $[b,a]$ be the interval where the cohomology of $X$ is non-zero. 
We get that we have an indecomposable complex $X$ of $\La $-modules of the form 
\[ 
0\to X_b=Q_n^k \xrightarrow{g} \cdots \xrightarrow{f}  X_a= Q_n^m \to 0
\]
with all non-zero maps are in the radical and $m,k>0$.
Since $Q_n$ is projective the map $f$ is not surjective and 
there is a non-zero map $t\colon X_a \to S_n=\rm{top}(Q_n)$ such that $tf=0$. Now we compose this with the inclusion into the socle of a summand of $X_b$. 
The composition and shift gives a not zero-homotop map $X\to S_n[-a]=S_n[-b+(b-a)] \to X[b-a]$ (it is not zero-homotopic since zero is the only possible homotopy).
Since $X$ is summand of a tilting complex, we conclude that $b=a$ and $X$ is a stalk complex. This implies $X=Q_n[m]$ for some $m \in \mathbb{Z}$.

%We also want to explain that a silting left mutation of a tilting complex at a summand of the form $Q_n[j]$ is not tilting: Since we know that every tilting complex contains a summand $Q_n[j]$  the mutation must have a new summand $Q_n[k]$ for some $k$. But this would imply that there a complex $U$ such that  $U\oplus Q_n[j]$ and $U\oplus Q_n[k]$ are tilting. Since $(U, Q_n[j+s])=0$ for any $s\neq 0$ we must have $j=k$. But then we have a contradiction. 
\end{proof}

\section{Braid group operations and derived Picard groups} \label{braid-DPic}

In this section we will only look at $e= \sum_{i=1}^n e_i\in \Pi= \Pi_{2n-1}$, $\La = e \Pi e =\La_n$. \\
\noindent
\emph{Introduction of the braid groups:} 
Let $B_{2n}$ be the group generated by $s_1, \ldots , s_{2n-1}$ with relations $s_is_j=s_js_i$ whenever $\lvert i-j \rvert \geq 2$, $i,j \in \{ 1,2, \ldots , 2n-1\}$ and 
$s_is_{i+1}s_i =s_{i+1} s_i s_{i+1}$, $1 \leq i \leq 2n-2$. This is called the Artin braid group of Dynkin type $\mathbb{A}_{2n-1}$ cf. \cite{KT}. 
Let $B_{2n}^+\subset B_{2n}$ be the submonoid generated by $s_1, \ldots , s_{2n-1}$. Right divisibility is the partial order $\geq_L$ on $B_{2n}$ defined by $y \geq_L x$ whenever $yx^{-1} \in B_{2n}^+$.

Let $B_{\Delta^f}\subset B_{2n}$ be the $\nu$-folded braid group associated to the involution $\nu $ on $\{1,2, \ldots , 2n-1\}$ defined as $\nu(i)=2n-i$, i.e. it is the subgroup generated by $t_1, \ldots , t_{n-1} , t_n$ with $t_n=s_n$ and $t_i= s_i s_{2n-i}$ for $i \neq n$. We observe that the subgroup generated by $t_1, \ldots , t_{n-1}$ is isomorphic to an Artin braid group $B_n$, we will just denote it by $B_n$.\\
\noindent
\emph{Introduction of the derived Picard group:} Let $\La$ be a finite-dimensional $K$-algebra and $\La^e=\La^{op} \otimes_K \La $. We call $\mathbb{T} \in D^b(\La^e\!-\!\modu )$ a two-sided tilting complex if there exists an $\mathbb{S} \in D^b(\La^e\!-\!\modu )$ such that 
$\mathbb{T}\otimes^{L} \mathbb{S} \cong \mathbb{S}\otimes^{L} \mathbb{T} \cong \La $ in $D^b(\La^e\!-\!\modu )$. The set of isomorphism classes of basic two-sided tilting complexes is known as the \emph{derived Picard group} and denoted by $\DPic (\La)$. In \cite{Y}, it is shown that $\DPic (\La )$ is a group with respect to $-\otimes^{L}-$ with unit $\La$. A result of Rickard \cite{R}, states that for $\mathbb{T},\mathbb{S}$ as before, one has quasi-inverse triangle autoequivalences $-\otimes^{L} \mathbb{T}, - \otimes^L\mathbb{S}$ of $D^b(\La\!-\!\modu )$ and $\mathbb{T}\otimes^{L} -, \mathbb{S} \otimes^L-$ of $D^b(\La^{op}\!-\!\modu )$. 
In particular, if $\La$ has finite (right) global dimension by forgetting the left module structure, we have a forgetful map $F\colon \DPic (\La )\to \tilt \La$. 
Recall, the \emph{outer automorphism group} $\Out (\La) = (\Aut_{K-alg} (\La)) / {\rm Inn} (\La)$ where ${\rm Inn} (\La)$ is the subgroup of inner automorphism, i.e. $K$-algebra automorphisms of the form $x \mapsto \la x \la^{-1}$ for some $\la \in \La^*$.  Furthermore for $\mathbb{T},\mathbb{T}' \in \DPic (\La) $ we have $\mathbb{T}_{\La}\cong \mathbb{T}'_{\La}$ in $D^b(\La\!-\!\modu)$ if and only if there is a $\phi \in \Out (\La )$ such that $\mathbb{T}\cong {}_{\phi} \La \otimes^L \mathbb{T}'$ in $D^b(\La^e\!-\!\modu )$, cf. \cite[Prop. 2.3]{RZ03}.

\begin{itemize}
    \item[(1)] for $\Pi_{2n-1}$: Aihara's and Mizuno's bijection \cite[Thm 6.6]{AM} 
\[ B_{\Delta^f} \to \tilt \Pi_{2n-1}\]
In \cite{Mi17}, Mizuno found an isomorphism of groups $\Out (\Pi_{2n-1}) \ltimes B_{\Delta^f} \to \DPic (\Pi_{2n-1})$. 
In \cite[Prop. 6.2.2]{I}, it was shown $\Out (\Pi_{2n-1})$ is isomorphic to $\Aut_{K-alg} (K[T] /(T^n))\ltimes \langle \iota  \rangle $ where $\iota$ is the involution defined by $\iota (e_i)=\iota (e_{2n-i}), \iota (\alpha_i )= \beta_{2n-1-i}$ and $\iota (\beta_i)= \alpha_{2n-1-i}$.  

    \item[(2)] for $\La_n$: In \cite[Prop. 7.1]{G} a poset morphism 
    \[ \rho \colon (B_n, \geq_{L}) \to \tilt_{Q_n} \La_n, x \mapsto T_x \]
 is constructed. Furthermore it is shown to factorize into a group homomorphism and the forgetful map $F\colon \DPic (\La_n) \to \tilt \La_n$, i.e. there is a group homomorphism $\overline{\rho}\colon B_n \to \DPic (\La_n), x \mapsto \mathbb{T}_x $ such that $\rho = F \circ \overline{\rho}$, i.e. $(\mathbb{T}_x)_{\La_n}\cong T_x$, cf. \cite[Prop. 6.3]{G}. 
  In loc. cit. it is also explained that there exists even a braid group action of $B_n$ on $\tilt \La_n$, the previous map is just the action/multiplication map on $\La_n$.  
\end{itemize}

%We denote by $B_{\Delta^f}^+$ the monoid of positive strands and $B_n$ the subbraid group generated by $t_1, \ldots , t_{n-1}$

Now, observe that Aihara's and Mizuno's bijection restricts to an injective map 
\[
\mu \colon B_n \to \tilt_{P_n} \Pi_{2n-1} 
\]

\begin{rem}\label{trivial-rem}
By the definition of the braid group operations we have a commutative diagram 
\[
\xymatrix{ 
& B_n \ar[dl]_{\rho} \ar[dr]^{\mu}& \\
\tilt_{Q_n} \La_n \ar[rr]^{\Phi} && \tilt_{P_n} \Pi_{2n-1} }
\]
In particular, the map $\rho $ is injective. 
\end{rem}

Recall, for a finite-dimensional algebra $\La$ we have a partial order on $\silt \La$ given by 
$S \leq T$ if and only if $\Hom_{K^b(\La\!-\!\proj)}(T, S[i])=0$ for all $i>0$. For any $S\leq T$ in $\silt \La$ we define the interval $[S,T] := \{ X \in \silt \La \mid S \leq X \leq T \}$ seen as a poset with the partial order $\leq $ restricted from  $\silt \La $. 

\begin{lem}\label{first-obs}
Every $T \in \Bild \rho $ fulfills $\End_{\Ka^b( \La_n\! -\!\proj)} (T)^{op}\cong \La_n$ and we have an isomorphism of posets  
\[ 
[T[1],T] \longleftrightarrow [\La_n[1], \La_n]
\]
\end{lem}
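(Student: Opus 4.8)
The plan is to deduce both assertions from the facts we already have at hand: the map $\rho$ is a poset morphism into $\tilt_{Q_n}\La_n$ (from \cite{G}), it factors through $\DPic(\La_n)$ via $\overline{\rho}$ with $(\mathbb{T}_x)_{\La_n}\cong T_x$, and it fits into the commutative triangle of Remark \ref{trivial-rem} with Aihara--Mizuno's injection $\mu$. First I would settle $\End_{\Ka^b(\La_n\text{-}\proj)}(T)^{\op}\cong\La_n$ for $T=T_x\in\Bild\rho$. Since $T_x=(\mathbb{T}_x)_{\La_n}$ is the restriction of a two-sided tilting complex, the triangle autoequivalence $-\otimes^L\mathbb{T}_x$ of $D^b(\La_n\text{-}\modu)$ sends the progenerator $\La_n$ to $T_x$ and induces an isomorphism $\End_{D^b}(\La_n)\xrightarrow{\sim}\End_{D^b}(T_x)$; as $\End_{D^b(\La_n\text{-}\modu)}(\La_n)^{\op}\cong\La_n$ and $\End_{D^b}=\End_{\Ka^b}$ on perfect complexes, we get $\End(T_x)^{\op}\cong\La_n$. (Alternatively one can observe that $\Phi(T_x)=\mu(x)$ and Aihara--Mizuno's tilting complexes are all tilting, hence their endomorphism rings are derived equivalent to $\Pi_{2n-1}$, but the $\DPic$ argument pins down the isomorphism type on the nose for $\La_n$ directly and is cleaner.)

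For the poset isomorphism $[T[1],T]\longleftrightarrow[\La_n[1],\La_n]$, the key point is that passing to the derived category of the endomorphism algebra identifies these two intervals. Concretely, Rickard's theorem gives a triangle equivalence $G:=-\otimes^L\mathbb{T}_x\colon D^b(\La_n\text{-}\modu)\xrightarrow{\sim} D^b(\La_n\text{-}\modu)$ (using that $\End(T_x)^{\op}\cong\La_n$, so the target is again $D^b(\La_n\text{-}\modu)$) with $G(\La_n)=T_x$, and such an equivalence restricts to a poset isomorphism on silting complexes: $G$ is compatible with shift and triangles, so it sends $\silt\La_n$ to $\silt\La_n$ bijectively, and it preserves the silting partial order $\leq$ because $\Hom_{\Ka^b}(T,S[i])\cong\Hom_{\Ka^b}(G(T),G(S)[i])$. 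Since $G(\La_n)=T_x$ and $G(\La_n[1])=T_x[1]$, the order-isomorphism $G$ restricts to a poset isomorphism between the interval $[\La_n[1],\La_n]$ and the interval $[T_x[1],T_x]$, which is exactly the claimed correspondence.

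I would present this in two short steps: (i) identify the endomorphism algebra using that $T_x$ comes from $\DPic(\La_n)$; (ii) use the associated derived autoequivalence to transport the interval $[\La_n[1],\La_n]$ onto $[T[1],T]$, noting that autoequivalences of $D^b(\La_n\text{-}\modu)$ preserve silting complexes and the silting order. The main obstacle, if any, is purely bookkeeping: one must make sure that $-\otimes^L\mathbb{T}_x$ really does land back in $D^b(\La_n\text{-}\modu)$ rather than in the derived category of $\End(T_x)^{\op}$ — this is where step (i) is used — and that it restricts to $\Ka^b(\La_n\text{-}\proj)$, which holds because $\La_n$ has finite global dimension so perfect complexes are intrinsically characterized and preserved by any triangle equivalence. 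Everything else is formal.
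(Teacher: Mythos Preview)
Your proposal is correct and follows essentially the same route as the paper: both lift $T=T_x$ to a two-sided tilting complex $\mathbb{T}_x\in\DPic(\La_n)$ via \cite[Prop.~6.3]{G}, then use the resulting Rickard autoequivalence of $D^b(\La_n\text{-}\modu)$ to identify $\End(T)$ with $\End(\La_n)$ and to transport the interval $[\La_n[1],\La_n]$ onto $[T[1],T]$. The only cosmetic difference is that the paper phrases the endomorphism computation via the inverse $\mathbb{S}$ (so that $\End(T)\cong\End(\mathbb{T}\otimes^L\mathbb{S})\cong\End(\La_n)$), whereas you run the equivalence in the forward direction; your explicit check that autoequivalences preserve the silting order and restrict to $\Ka^b(\La_n\text{-}\proj)$ is exactly what the paper compresses into the sentence ``equivalences of triangle categories induce isomorphisms on the silting posets and commute with the shift.''
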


\begin{proof}
By \cite[Prop. 6.3]{G}, we have that for every $T\in\tilt \La_n$, there exists a $\mathbb{T}\in \DPic (\La_n)$ with $\mathbb{T}_{\La_n}=T$. Let $\mathbb{S}$ such that $\mathbb{T} \otimes^L \mathbb{S}\cong \La_n$. 
By \cite{R} we have that $\End_{\La_n}(T)\cong \End_{D^b(\La_n\!-\!\modu )} (\mathbb{T}_{\La_n}) \cong \End_{D^b(\La_n\!-\!\modu )} (\mathbb{T}\otimes^L \mathbb{S}) \cong \End_{\La_n} (\La_n)$. Since equivalences of triangle categories induce isomorphisms on the silting posets and commute with the shift, the second claim also follows from loc. cit. 
\end{proof}

\section{The main steps of the proofs}
\label{main-step}

We prove the following lemma which is the main step for completing the proof of Theorem \ref{main-thm}. 
We call a triangulated category $\mcT$ strongly silting-connected if all silting objects $T,S$ with $\Hom_{\mcT} (T, S[>0])=0$ (i.e. $S \leq T$), the silting object $S$ can be obtained from $T$ by iterated irreducible left mutation (cf.  \cite[Def. 3.1]{A}). 

\begin{lem} \label{main-obs}
Every $T\in \silt_{Q_n} \La_n$ is obtained from $\La_n$ by iterated irreducible silting mutations in $\silt \La$ such that every intermediate mutation has $Q_n$ as a summand.  
\end{lem}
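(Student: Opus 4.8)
The plan is to combine the symmetric doubling of Section~\ref{symm-double} with the strong silting-connectedness of $\Pi_{2n-1}$ established by Aihara--Mizuno. First I would recall that $\Pi=\Pi_{2n-1}$ is self-injective and that $\Ka^b(\Pi\!-\!\proj)$ is strongly silting-connected by the general result of Aihara (applicable because $\Pi$ is a symmetric, hence self-injective, algebra for which $\silt\Pi=\tilt\Pi$ is finite and connected by \cite{AM}); in particular for any $T\in\tilt\Pi$ with $T\leq\Pi$ one can pass from $\Pi$ down to $T$ by iterated irreducible left mutations. Now take $T\in\silt_{Q_n}\La_n$. Since $Q_n$ is a summand of $T$ and, by Lemma~\ref{last-part-of-thm}, this $Q_n$ is (up to shift) forced, $T$ is in fact a tilting complex (all elements of $\silt_{Q_n}\La_n$ meeting $\tilt\La_n$ will be tilting once we know $\silt_{Q_n}\La_n$ is a mutation component containing $\La_n$; alternatively one shows directly that $T\le\La_n$ implies $T\in\tilt\La_n$ via Lemma~\ref{first-obs}). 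Apply $\Phi$ from Lemma~\ref{sym}: $\Phi(T)=\ell(T)\oplus\nu\ell(T)\in\tilt_{P_n}\Pi$, and because $\Phi$ is compatible with mutation and $\Phi(\La_n)=\Pi$, it suffices to produce a chain of irreducible left mutations from $\Pi$ to $\Phi(T)$ inside $\tilt\Pi$ \emph{all of whose terms lie in the image of $\Phi$} (equivalently, all of whose terms have $P_n$ as a summand and are $\nu$-symmetric), and then transport this chain back through the bijection $\Phi\colon\tilt_{Q_n}\La_n\to\tilt^{e\text{-}\sym}$.

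The key step is therefore: exhibit the mutation chain from $\Pi$ to $\Phi(T)$ entirely within $\tilt^{e\text{-}\sym}$. Here I would use the folded braid group picture of Section~\ref{braid-DPic}. By Remark~\ref{trivial-rem} the square relating $\rho$, $\mu$ and $\Phi$ commutes, and $\mu\colon B_n\to\tilt_{P_n}\Pi_{2n-1}$ is the restriction of the Aihara--Mizuno bijection $B_{\Delta^f}\to\tilt\Pi_{2n-1}$ to the folded subgroup $B_n\subset B_{\Delta^f}$. Under that bijection, irreducible left mutation corresponds to right multiplication by a generator (up to the usual covering/divisibility combinatorics), and the generators $t_i=s_is_{2n-i}$ of the folded braid group $B_n$ act as the \emph{compositions} of the two commuting Bongartz-type mutations at the $\nu$-paired vertices $i$ and $2n-i$. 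So a reduced word for the element $x\in B_n$ with $\Phi(T)=\mu(x)$ gives, letter by letter, a path in $\tilt\Pi_{2n-1}$ of length $2\ell(x)$ from $\Pi$ to $\Phi(T)$ that alternately mutates at $i$ and at $2n-i$; at the intermediate (non-$\nu$-symmetric) stages the vertex $n$ is untouched, so $P_n$ remains a summand throughout, and after each \emph{pair} of mutations we land back in $\tilt^{e\text{-}\sym}$. Transporting the $\nu$-symmetric stages back via $\Phi^{-1}$ yields exactly the asserted chain of irreducible silting mutations of $\La_n$, each intermediate term keeping $Q_n$ as a summand by Lemma~\ref{last-part-of-thm}.

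I would then tie off the bookkeeping: each single folded generator contributes one irreducible mutation of $\La_n$ (the pair of half-mutations on the $\Pi$-side is one move on the $\La_n$-side, since $\Phi$ is the mutation-equivariant map of Lemma~\ref{sym}), so ``iterated irreducible silting mutations in $\silt\La$'' is literally the image under $\Phi^{-1}$ of the folded mutation path; and ``$Q_n$ a summand at every intermediate step'' follows because every term of that path lies in $\tilt_{Q_n}\La_n$ by construction and by Lemma~\ref{last-part-of-thm}. Finally, to get from \emph{some} chain to a chain realizing a given $T\le\La_n$ one invokes the strong silting-connectedness input: the Aihara--Mizuno poset $\tilt\Pi_{2n-1}\cong B_{\Delta^f}$ with divisibility order has $\Pi$ as a local maximum on the folded component, and $\Phi(T)\le\Pi$ forces $x\in B_n^+$, so a reduced word in the $t_i$ exists and the chain is a genuine descending chain of left mutations.

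The main obstacle I anticipate is the middle paragraph: proving that the mutation path for $x\in B_n$ can be chosen to pass only through $\tilt^{e\text{-}\sym}$, i.e.\ that right multiplication by a folded generator $t_i=s_is_{2n-i}$ really is realized by two consecutive irreducible left mutations of $\tilt\Pi_{2n-1}$ (and not, say, a left and a right mutation, or a move that temporarily destroys the $P_n$ summand or leaves the image of $\Phi$ irrecoverably). This is where one must quote the precise compatibility in \cite[Thm 6.6]{AM} between the Coxeter/braid combinatorics and irreducible mutation, and check that the two half-moves at $i$ and $2n-i$ commute and are both \emph{left} mutations when $x\ge_L xt_i$ — a routine but genuinely necessary verification using that $s_i$ and $s_{2n-i}$ commute in $B_{2n}$ and that the Bongartz completion at vertex $n$ is never needed.
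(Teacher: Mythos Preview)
Your plan has a genuine circularity that I do not see how to close. You need to apply $\Phi$ to an arbitrary $T\in\silt_{Q_n}\La_n$, but Lemma~\ref{sym} only defines $\Phi$ on $\tilt\La_n$: the proof that $\Phi(P)$ is presilting uses Serre duality together with $(P,P[-i])=0$, which fails for a mere silting complex. Your parenthetical attempts to get $T$ tilting do not work---Lemma~\ref{last-part-of-thm} goes the wrong way, Lemma~\ref{first-obs} only applies to $T\in\Bild\rho$, and ``once we know $\silt_{Q_n}\La_n$ is a mutation component'' is the statement you are proving. The second, deeper circularity is your assumption that $\Phi(T)=\mu(x)$ for some $x\in B_n$. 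The Aihara--Mizuno bijection only tells you that $\Phi(T)$ corresponds to some $y\in B_{\Delta^f}$; you would need to know that $\nu$-symmetric tilting complexes with summand $P_n$ (that is, $\tilt^{e\text{-}\sym}$) correspond exactly to the subgroup $B_n\subset B_{\Delta^f}$. But via the commuting square of Remark~\ref{trivial-rem} this is equivalent to the surjectivity of $\rho$, which is Corollary~\ref{main-cor}---proved \emph{using} the present lemma. (Note also that every element of $B_{\Delta^f}$ is $\nu$-fixed, so $\nu$-symmetry alone cannot single out $B_n$.)

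The paper sidesteps both problems by working entirely with silting and never touching $\Pi_{2n-1}$ or the image of $\rho$. It passes to the Verdier quotient $\mcT=\Ka^b(\La_n\!-\!\proj)/\thick(Q_n)$ and uses Iyama--Yang silting reduction to identify $\silt_{Q_n}\La_n$ with $\silt\mcT$ as posets. It then shows $\mcT$ is silting-discrete: for any $B$ reached from $\overline{\La_n}$ by iterated irreducible left mutation, one lifts the chain back to $\silt_{Q_n}\La_n$, observes each lifted term lies in $\Bild\rho$, and invokes Lemma~\ref{first-obs} and $\tau$-tilting finiteness of $\La_n$ to get $[B[1],B]$ finite. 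Silting-discreteness then gives strong silting-connectedness of $\mcT$ by Aihara, which transports back to $\silt_{Q_n}\La_n$. The point is that this argument only ever needs $\Bild\rho$ to be closed under irreducible mutation (already known from \cite{G}), not that it exhausts $\silt_{Q_n}\La_n$.
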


\begin{proof}
We consider $\mcT:= \Ka^b (\La_n \!-\!\proj)/ \rm{thick} (Q_n )$ and the Verdier quotient functor $\pi \colon 
\Ka^b (\La_n \!-\!\proj) \to \mcT$. 
By the silting reduction theorem \cite[Cor. 3.8]{IY}, there is an isomorphism of posets 
\[
\begin{aligned}
R\colon \silt_{Q_n} \La_n &\to \silt \mcT \\
X & \mapsto \pi (X)=: \overline{X}
\end{aligned}
\]
We claim that for every $S\in \silt_{Q_n} (\La_n)$ the map \[ 
\begin{aligned}
\left[ S[1], S\right] \cap \silt_{Q_n}\La_n &\to [\overline{S}[1], \overline{S}]\\
X &\mapsto \overline{X}
\end{aligned}
\]
is surjective. To prove this let $S' \in \silt_{Q_n} \La_n$ such that $\overline{S'}= \overline{S}[1]$. 
By \cite[Lem 3.4]{IY}, since $S,S'\in \mcZ:= {}^{\perp}Q_n[>0]\cap Q_n[<0]^{\perp}$ we have that for every $\ell >0$ the natural maps 
\[ \Hom_{\Ka^b (\La_n \!-\!\proj)} (S', S[\ell ]) \to \Hom_{\mcT} (\overline{S}[1], \overline{S}[\ell ]) \text{ and } \Hom_{\Ka^b (\La_n \!-\!\proj)} (S , S'[\ell ]) \to \Hom_{\mcT} (\overline{S}, \overline{S}[\ell +1 ])\]
are isomorphisms. In particular we conclude $\Hom (S', S[>1])=0= \Hom (S, S'[>0])$ and $S[1] \leq S'\leq S$. This proves that the map is surjective since the composition $ \left[ S', S \right] \cap \silt_{Q_n}\La_n \subset \left[ S[1], S\right] \cap \silt_{Q_n}\La_n \to [\overline{S}[1], \overline{S}] $ is bijective. 

By \cite[Thm 2.4]{AM}, the category $\mcT$ is silting-discrete if and only if for a silting object $A$ and every silting object $B$ obtained from $A$ by iterated irreducible left mutation the interval $[B[1], B]=\{ T \in \silt \mcT \mid B \geq T \geq B[1] \}$ is finite. 
We consider the silting object $B_0=\overline{\La_n}$ and $B$ obtained from it by iterated irreducible left mutation, i.e. there exist irreducible left mutations $B_{i+1}$ of $B_i$ such that \[ B_0 > B_1 > \cdots > B_m=B .\]
The existence of an irreducible left mutation from $B_i$ to $B_{i+1}$ is by \cite[Thm 2.35]{AI} equivalent to that $B_i >B_{i+1}$ and the interval $[B_{i+1}, B_i]= \{ B_i, B_{i+1}\}$. 
Since $R$ is an isomorphism of posets we find $T_i \in \silt_{Q_n} \La_n$ with $\overline{T_i}= B_i$ and $T_0 > \cdots >T_m $ and $[T_{i+1}, T_i]\cap \silt_{Q_n} \La_n=\{ T_i, T_{i+1}\}$. We claim that 
$[T_{i+1}, T_i]=\{ T_i, T_{i+1}\}$, so assume $T_{i+1}\leq S \leq T_i$ in $\silt \La_n$. Then we have $\Hom_{\Ka^b (\La_n \!-\!\proj)} (S, Q_n[>0])=0=\Hom_{\Ka^b (\La_n \!-\!\proj)} (Q_n[<0], S)$ and therefore $S\oplus Q_n $ is a silting complex, implying $Q_n \in \add (S)$ and $S\in\{T_i, T_{i+1}\}$. \\
This implies that $T_{i+1}$ is an irreducible left mutation of $T_{i}$. In particular $T_m$ is obtained from $T_0=\La_n$ by iterated irreducible left mutation in $\silt_{Q_n}\La_n$. This implies that $T_i \in \Bild \rho \subset \tilt_{Q_n} \La_n$ and from Lemma \ref{first-obs} it follows that the interval $[T_m[1], T_m]$ is finite since $\La_n$ is $\tau$-tilting finite by \cite{IZ}. This implies that $[B[1], B]$ is also finite and therefore $\mcT $ is silting-discrete. 
By \cite[Cor. 3.9]{A}, if $\mcT$ is silting-discrete, then it is strongly silting connected. 
By the silting reduction theory this implies that $\silt_{Q_n} \La_n$ is strongly silting connected. 
\end{proof}

\begin{cor} \label{main-cor}
We have $\tilt_{Q_n} \La_n= \silt_{Q_n} \La_n$ and the map $\rho \colon B_n \to \tilt_{Q_n} \La_n $ is bijective. 
\end{cor}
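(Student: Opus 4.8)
The plan is to combine Lemma \ref{main-obs} with the previously established maps $\rho$ and $\mu$ and the bijection $\Phi$ of Lemma \ref{sym}. First I would argue that $\tilt_{Q_n}\La_n = \silt_{Q_n}\La_n$. One inclusion is trivial. For the other, let $S \in \silt_{Q_n}\La_n$. By Lemma \ref{main-obs}, $S$ is obtained from $\La_n$ by a finite sequence of irreducible left silting mutations all of which keep $Q_n$ as a summand; moreover the proof of Lemma \ref{main-obs} shows (via the identification with $\Bild\rho$) that each intermediate object lies in $\Bild\rho \subset \tilt_{Q_n}\La_n$. Hence $S \in \Bild\rho$, which in particular gives $S \in \tilt_{Q_n}\La_n$. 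Conversely $\tilt_{Q_n}\La_n \subset \silt_{Q_n}\La_n$ always, so the two sets coincide, and in fact $\silt_{Q_n}\La_n = \Bild\rho$.

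Next I would show $\rho \colon B_n \to \tilt_{Q_n}\La_n$ is bijective. Injectivity is already recorded in Remark \ref{trivial-rem} (it follows from $\mu = \Phi \circ \rho$ and the injectivity of Aihara--Mizuno's bijection together with $\Phi$ being a bijection onto $\tilt^{e\text{-}\sym}$ by Lemma \ref{sym}). For surjectivity: we have just seen $\silt_{Q_n}\La_n = \Bild\rho$, and since $\tilt_{Q_n}\La_n = \silt_{Q_n}\La_n$, this gives $\tilt_{Q_n}\La_n = \Bild\rho$, i.e. $\rho$ is onto. That $\rho$ is a poset isomorphism (rather than merely a bijection) then follows: $\rho$ is already a poset morphism by \cite[Prop. 7.1]{G}, and to see that $\rho^{-1}$ is order-preserving one transports the order along $\mu = \Phi\circ\rho$, using that $\Phi$ is compatible with mutation (Lemma \ref{sym}) and that Aihara--Mizuno's bijection $B_{\Delta^f} \to \tilt \Pi_{2n-1}$ is a poset isomorphism with $(B_n,\geq_L)$ embedded compatibly.

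The step I expect to be the main obstacle is making rigorous the claim used above that the intermediate silting objects produced by the reduction argument in Lemma \ref{main-obs} actually land in $\Bild\rho$ and not merely in $\silt_{Q_n}\La_n$; this requires care because the silting-reduction isomorphism $R$ of \cite[Cor. 3.8]{IY} transports the mutation structure, but one must check that the objects obtained by iterated irreducible left mutation starting from $\La_n$ are exactly those in $\Bild\rho$ — equivalently, that $\Bild\rho$ is closed under irreducible left mutation within $\silt_{Q_n}\La_n$ and exhausts the mutation-connected component of $\La_n$. Given Lemma \ref{main-obs} this is essentially bookkeeping: every $T \in \silt_{Q_n}\La_n$ is mutation-reachable from $\La_n$ inside $\silt_{Q_n}\La_n$, and along the way Lemma \ref{first-obs} guarantees each such $T$ has endomorphism algebra $\La_n$, which pins it down as a value of the braid action; but spelling out that the braid action's orbit map is precisely this mutation class is where the argument must be assembled carefully.
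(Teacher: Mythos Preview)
Your proposal is correct and follows essentially the same route as the paper: injectivity from Remark \ref{trivial-rem}, surjectivity onto $\silt_{Q_n}\La_n$ from Lemma \ref{main-obs}, and then the containment $\Bild\rho \subset \tilt_{Q_n}\La_n \subset \silt_{Q_n}\La_n$ forces all three sets to coincide. The step you flag as the main obstacle---that iterated irreducible mutation from $\La_n$ within $\silt_{Q_n}\La_n$ stays inside $\Bild\rho$---is precisely what the paper also leans on (it is asserted inside the proof of Lemma \ref{main-obs} via the line ``This implies that $T_i \in \Bild\rho$''), and your closing paragraph on the poset-isomorphism refinement goes beyond what the corollary itself claims.
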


\begin{proof}
Consider the composition $\rho \colon B_n \to \tilt_{Q_n} \La_n \subset \silt_{Q_n}\La_n$. It is injective (cf. remark \ref{trivial-rem}) and surjective by lemma \ref{main-obs}. But since the image of $\rho $ lies in $\tilt \La_n$ the claim follows. 
\end{proof}

The last step of the proof of Theorem \ref{main-thm} follows from Lemma \ref{last-part-of-thm}, which implies 
$\tilt \La_n = \bigsqcup_{m \in \Z} \tilt_{Q_n[m]}\La_n$. 

As a corollary of the main theorem, we obtain:
\begin{cor} \label{rigidity}
For every $T\in \tilt \La_n$ we have an isomorphism of algebras  $\End_{\Ka^b(\La_n-\proj)} (T)^{op} \cong \La_n$.  \end{cor}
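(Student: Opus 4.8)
The plan is to bootstrap from the results already established. By Theorem \ref{main-thm} (equivalently by Lemma \ref{last-part-of-thm} together with Corollary \ref{main-cor}), every $T \in \tilt \La_n$ lies in some mutation-connected component $\silt_{Q_n[m]} \La_n$ for a unique $m \in \Z$. The shift functor $[-m] \colon \Ka^b(\La_n\!-\!\proj) \to \Ka^b(\La_n\!-\!\proj)$ is a triangle autoequivalence preserving the silting poset structure and sending $\silt_{Q_n[m]} \La_n$ isomorphically onto $\silt_{Q_n} \La_n = \tilt_{Q_n} \La_n$; it also induces an isomorphism on endomorphism algebras, $\End_{\Ka^b(\La_n\!-\!\proj)}(T)^{op} \cong \End_{\Ka^b(\La_n\!-\!\proj)}(T[-m])^{op}$. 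So it suffices to treat the case $T \in \tilt_{Q_n} \La_n$.

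For $T \in \tilt_{Q_n}\La_n$, Corollary \ref{main-cor} gives $T \in \Bild \rho$, and then Lemma \ref{first-obs} applies directly: it states precisely that every $T \in \Bild \rho$ satisfies $\End_{\Ka^b(\La_n\!-\!\proj)}(T)^{op} \cong \La_n$. Thus the corollary follows by combining these two reductions. I would write this as: given $T \in \tilt \La_n$, pick the $m$ with $T \in \tilt_{Q_n[m]}\La_n$, set $T' := T[-m] \in \tilt_{Q_n}\La_n = \Bild \rho$, and conclude via Lemma \ref{first-obs} that $\End(T)^{op} \cong \End(T')^{op} \cong \La_n$.

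There is essentially no obstacle here; the work has all been done. The only point requiring a line of justification is the behaviour of endomorphism algebras under the shift autoequivalence, which is immediate since $\Hom_{\Ka^b(\La_n\!-\!\proj)}(T[-m], T[-m]) = \Hom_{\Ka^b(\La_n\!-\!\proj)}(T, T)$ as $K$-algebras, shift being an autoequivalence commuting with itself. If one prefers to avoid invoking Lemma \ref{first-obs} as a black box, an alternative is to recall from \cite[Prop. 6.3]{G} that each $T \in \tilt \La_n$ underlies a two-sided tilting complex $\mathbb{T} \in \DPic(\La_n)$, so that by \cite{R} the functor $-\otimes^L \mathbb{T}$ is a triangle autoequivalence of $D^b(\La_n\!-\!\modu)$ carrying $\La_n$ to $T$, whence $\End_{\Ka^b(\La_n\!-\!\proj)}(T)^{op} \cong \End_{D^b(\La_n\!-\!\modu)}(\La_n)^{op} \cong \La_n$; but this requires knowing the $\DPic$-lift holds for \emph{all} tilting complexes, which in the present state of the argument is exactly what Theorem \ref{main-thm} delivers, so the two routes are equivalent.
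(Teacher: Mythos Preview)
Your proof is correct and follows essentially the same route as the paper's: the paper's proof is the one-line ``This follows directly from the previous Corollary \ref{main-cor} and Lemma \ref{first-obs}'', implicitly using the decomposition $\tilt \La_n = \bigsqcup_{m\in\Z}\tilt_{Q_n[m]}\La_n$ established just before. You have simply made explicit the shift reduction to the component $\tilt_{Q_n}\La_n$ that the paper takes for granted.
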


\begin{proof}
This follows directly from the previous Corollary \ref{main-cor} and Lemma \ref{first-obs}. 
\end{proof}

We split Theorem \ref{Thm-DPic} into the following two propositions.

\begin{pro}\label{out-La}
The group ${\rm Out}(\La_n) $ is isomorphic to $\Aut_{K-alg} (K[T]/(T^n))$
\end{pro}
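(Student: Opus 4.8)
The plan is to compute $\Aut_{K\text{-}alg}(\La_n)$ directly from the bound quiver presentation of $\La_n$ and then quotient by inner automorphisms. First I would observe that the quiver $Q_{\La_n}$ underlying $\La_n$ is the double $A_n$-quiver (arrows $\al_i, \beta_i$) and that it has no loops and no multiple arrows between distinct vertices; hence any $K$-algebra automorphism $\phi$ of $\La_n$ permutes the primitive central-modulo-radical data, i.e. induces a graph automorphism of the underlying quiver. Since $A_n$ with the zig-zag orientation of the double quiver is a path on $n$ vertices, its only graph automorphism is the order-two flip $i \mapsto n+1-i$; but I would rule the flip out because it does not preserve the relation ideal: the relation $\beta_1\al_1$ lives at vertex $1$ (no relation $\al_{n-1}\beta_{n-1}$ at vertex $n$), so a flip would need a relation at vertex $n$ of the same shape, which is absent. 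Therefore $\phi$ fixes every vertex, i.e. $\phi(\ep_i)$ is conjugate to $\ep_i$; after composing with a suitable inner automorphism we may assume $\phi(\ep_i)=\ep_i$ for all $i$.

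Next I would analyze such a "vertex-fixing" automorphism on arrows. Since $\phi(\ep_i)=\ep_i$, we have $\phi(\al_i) = c_i \al_i$ and $\phi(\beta_i) = d_i \beta_i$ for nonzero scalars $c_i, d_i \in K^\times$ (the arrow spaces $\ep_{i+1}\La_n\ep_i$ modulo higher radical are one-dimensional, so $\phi$ acts by a scalar up to higher-order terms; a standard filtration/induction argument upgrades this to an exact scalar after a further inner adjustment, or one shows the scalars are forced). The commutation relations $\al_i\beta_i = \beta_{i+1}\al_{i+1}$ for $1 \le i \le n-2$ then force $c_i d_i = c_{i+1} d_{i+1}$ for all such $i$, while the relation $\beta_1\al_1 = 0$ imposes no further constraint. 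Conjugating by the invertible element $\sum_i \la_i \ep_i$ rescales $c_i \mapsto \la_{i+1}\la_i^{-1} c_i$ and $d_i \mapsto \la_i \la_{i+1}^{-1} d_i$, so modulo inner automorphisms we can normalize all $c_i = 1$; the remaining data is $(d_i)$ with the single constraint that $d_i$ is constant in $i$ (all $d_i$ equal, say to $t \in K^\times$, once the $c_i$ are trivialized — here I would double-check the exact surviving freedom against the relations). This exhibits a surjection from $K^\times$ onto $\Out(\La_n)$. To identify the kernel and match with $\Aut_{K\text{-}alg}(K[T]/(T^n))$, I would use the recollement/identification $\La_n = \End_{K[T]}(\bigoplus_{i=1}^n K[T]/(T^i))$ from the introduction: an automorphism of the $K$-algebra $K[T]/(T^n)$ is exactly $T \mapsto uT$ for a unit $u$ with $u$ well-defined modulo... in fact $\Aut_{K\text{-}alg}(K[T]/(T^n)) \cong (K[T]/(T^{n-1}))^\times$ via $T \mapsto T\cdot(\text{unit})$, and such an automorphism induces a functorial automorphism of the whole additive generator, hence of $\La_n$; I would check this assignment is a well-defined injective group homomorphism $\Aut_{K\text{-}alg}(K[T]/(T^n)) \to \Aut_{K\text{-}alg}(\La_n)$ and that its composite to $\Out(\La_n)$ is bijective, by comparing it on arrows with the normal form above.

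The main obstacle I anticipate is the bookkeeping in the middle step: rigorously passing from "$\phi$ acts by a scalar on each arrow modulo higher radical terms" to an honest normal form $\phi(\al_i) = c_i\al_i$, $\phi(\beta_i) = d_i\beta_i$ after adjusting by an inner automorphism, and then correctly tracking which one-parameter family of normalized automorphisms survives modulo $\Inn(\La_n)$ given the relations. A clean way to finesse this is to avoid arrow-by-arrow computation entirely and instead argue functorially: show every $K$-algebra automorphism of $\La_n$ is, up to inner, induced by a $K$-algebra automorphism of $K[T]/(T^n)$ acting on the module $\bigoplus_{i=1}^n K[T]/(T^i)$, using that this module is (up to the action) the unique basic additive generator of $\add$ of a faithful module over the commutative ring, so that $\Aut$ of the endomorphism algebra is controlled by $\Aut$ of the base together with the autoequivalences of $\add(\bigoplus K[T]/(T^i))$ — the latter being trivial on objects here. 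Then $\Out(\La_n)$ is exactly the image of $\Aut_{K\text{-}alg}(K[T]/(T^n))$, and one checks this map is injective by noting a nontrivial automorphism of $K[T]/(T^n)$ moves the idempotent-compatible generators in a way no inner automorphism of $\La_n$ can undo.
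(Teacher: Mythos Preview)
There is a genuine gap in your arrow analysis. You assert that after an inner adjustment a vertex-fixing automorphism satisfies $\phi(\al_i)=c_i\al_i$ and $\phi(\beta_i)=d_i\beta_i$ for \emph{scalars} $c_i,d_i\in K^\times$, and then conclude that $\Out(\La_n)$ is a quotient of $K^\times$. But the spaces $\ep_{i+1}\La_n\ep_i$ and $\ep_i\La_n\ep_{i+1}$ are not one-dimensional: for instance $\ep_{i+1}\La_n\ep_i\cong\Hom_{K[T]}(K[T]/(T^i),K[T]/(T^{i+1}))$ has dimension $i$. Modulo the radical you do get a scalar, but the ``standard filtration/induction argument'' you invoke to kill the higher-order terms does not exist here: in fact the paper shows that the subgroup $H$ of automorphisms fixing every $\ep_i$ and every $\al_j$ satisfies $H\cap\Inn(\La_n)=\{\id\}$, so no nontrivial higher-order modification of the $\beta_i$'s is inner. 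A general element of $H$ sends $\beta_i\mapsto\sum_{j\ge 1}s_j(\beta_i\al_i)^{j-1}\beta_i$ with $s_1\neq 0$, and the coefficients $s_2,\ldots,s_{n-1}$ survive to $\Out(\La_n)$. Your own computation therefore contradicts your subsequent (correct) remark that $\Aut_{K\text{-}alg}(K[T]/(T^n))$ is, as a set, $(K[T]/(T^{n-1}))^\times$, which is $(n{-}1)$-dimensional and not a quotient of $K^\times$ for $n\ge 3$.

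The paper's argument fixes exactly this: it uses an inner automorphism to normalise not both families of arrows but only the $\al_i$'s (writing $\phi(\al_i)=\mu_{i+1}\al_i$ with $\mu_{i+1}$ a unit in $\ep_{i+1}\La_n\ep_{i+1}\cong K[T]/(T^{i+1})$, then absorbing the $\mu$'s inductively into a conjugation), leaving the full freedom in the $\beta_i$'s. The resulting subgroup $H$ is then matched explicitly with $\Aut_{K\text{-}alg}(K[T]/(T^n))$ via $g(T)=\sum s_jT^j\;\Rightarrow\;\phi(g)(\beta_i)=\sum s_j(\beta_i\al_i)^{j-1}\beta_i$, and one checks $H\cap\Inn(\La_n)=\{\id\}$ by showing any $\la\in\La_n^\times$ commuting with all $\ep_i$ and all $\al_j$ is central. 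Your functorial backup plan is in the right spirit, but to make it into a proof you still need the surjectivity onto $\Out(\La_n)$, which is precisely the normalisation step your first approach mishandles.
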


The proof is completely analogue to the proof of \cite[Prop. 6.2.2]{I}. 
\begin{proof}
Let $H=\{g \in \Aut_{K-alg}(\La_n) \mid g (e_i)=e_i, g (\alpha_j ) =\alpha_j\}$. 
1. We first show that $\Aut_{K-alg}(\La_n)$ is generated by $H$ and $\Inn (\La_n)$. 
Let $g \in \Aut_{K-alg}$, then there is an  $\la \in \La_n^*$ and $\si \in S_n$ such that $g(e_i)=\la e_{\si (i)} \la^{-1}$ for every $i$ (see loc. cit (i)). 
Since $g(e_i\La_n) = \la (e_{\si (i)}\La_n ) \la^{-1}$, we have $\dim_K  e_i \La_n = \dim_K e_{\si (i)}\La_n$ and therefore $\si (i)= i$ for every $i$. So now assume wlog. $g(e_i)=e_i$ for every $i$. We observe $e_{i+1} \La_n e_{i}\cong \Hom_{K[T]}(K[T]/(T^i), K[T]/(T^{i+1}))$ is generated by  $\alpha_i$ (corresponding to $K[T]/(T^i) \xrightarrow{T\cdot} K[T]/(T^{i+1})$) as a $e_{i+1} \La_n e_{i+1}\cong \End_{K[T]}(K[T]/(T^{i+1}))=K[T]/(T^{i+1})$-module.
Since $g(\alpha_i) \in e_{i+1}\La_n e_i$, there is a $\mu_{i+1} \in (e_{i+1}\La_n e_{i+1})^*$ such that $g(\alpha_i)= \mu_{i+1} \alpha_i$ for every $i$. Inductively, we can find $\la_i \in (e_i \La_n e_i)^*$ such that $\la_1=e_1$ and $ g(\alpha_i)\la_i=\la_{i+1}\alpha_i $ for every $i$. Define $\la :=\sum_{i=1}^n \la_i \in \La_n^*$, then $\la e_i \la^{-1}=e_i$ and $\la g (\alpha_j) \la^{-1} =\alpha_j$. \\
2. Secondly, we construct an isomorphism $\phi \colon \Aut_{K-alg}(K[T]/(T^n))\to H$ of groups. Let $g\in  \Aut_{K-alg}(K[T]/(T^n))$, then there exists $s_1\in K^*$, $s_j \in K$, $2\leq j\leq n-1$, such that $g(T)= \sum_{j=1}^{n-1}s_j T^j$. Then define $\phi (g)\in H$ via $\phi (g) (\beta_i):= \sum_{j=1}^{n-1} s_j (\beta_i \alpha_i)^{j-1} \beta_i $, it is easily be checked to define an injective group homomorphism. To see it is surjective, take $h \in H$, by the relation $\alpha_{i+1} h(\beta_{i+1})= h(\beta_i) \alpha_i$, it is easily checked that there is $s_1 \in K^*$, $s_j \in K$, $1 \leq j\leq n-1$ such that $h(\beta_i)= \sum_{j=1}^{n-1} s_j (\beta_i \alpha_i)^{j-1} \beta_i$, this clearly implies $h \in \Bild \phi$. \\
3. We want to see $\Aut_{K-alg}(K[T]/(T^n))\cong H \ltimes \Inn (\La_n)$, it is enough to show $\Inn (\La_n ) \cap H=\{\id \}$ since $\Inn (\La_n)$ is a normal subgroup, we have that $H$ operates by conjugation on it and the multiplication map $H\ltimes \rm{Inn}(\La_n) \to \Aut_{K-alg}(K[T]/(T^n)), (h,n)\mapsto hn$ fulfills: If its kernel ${\Inn} (\La_n ) \cap H=\{\id\}$, then it is surjective since then the image is a subgroup which contains ${\Inn} (\La_n ) \cup H$. \\
Take $g\in {\Inn} (\La_n ) \cap H$, so there exists $\la\in \La_n^*$ such that $g(x)=\la x \la^{-1}$ for all $x \in \La_n$. The equality $\la e_i = e_i \la =: \la_i \in (e_i \La e_i)^*$ implies $\la = \sum_{i=1}^n \la_i$. By $\la_i \alpha_i = \alpha_i \la_{i+1}$ for every $i$, one can check $\la = c_0 + c_1 \gamma + c_2 \gamma^2 +\cdots +c_{n-1}\gamma^{n-1}$ where $\gamma = \sum_{j=1}^{n-1} \alpha_j \beta_j$ and $c_i \in K$. It is easily checked that $\gamma$ is central and therefore $\la$ is in the center of $\La_n$ implying $g=\id$. 

\end{proof}

\begin{pro} \label{derived-Pic}
The following map 
\[
\begin{aligned} \rm{Out} (\La_n)\times \mathbb{Z} \times B_n &\to \rm{DPic}( \La_n)  \\
(\phi , m , x) &\mapsto ({}_{\phi^{-1}}\La_n \otimes_{\La_n}^{L}T_x)[m]
\end{aligned}
\]
is an isomorphism of groups.
\end{pro}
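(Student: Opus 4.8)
The plan is to exhibit $\DPic(\La_n)$ as the internal direct product of three pairwise commuting subgroups matching the three factors of the domain, and then to check bijectivity with the forgetful functor $F\colon\DPic(\La_n)\to\tilt\La_n$ (which exists since $\gldim\La_n=2$). First I would record the three group homomorphisms into $\DPic(\La_n)$: the map $\iota_{1}\colon\Out(\La_n)\to\DPic(\La_n)$, $\phi\mapsto{}_{\phi^{-1}}\La_n$, injective by the description of the fibres of $F$ in \cite[Prop.\ 2.3]{RZ03}; the map $\iota_{2}\colon\Z\to\DPic(\La_n)$, $m\mapsto\La_n[m]$, whose image is central because $\La_n[m]\otimes^{L}-$ is the shift $[m]$; and the map $\iota_{3}=\overline\rho\colon B_n\to\DPic(\La_n)$, $x\mapsto\mathbb{T}_x$, which is the group homomorphism of \cite[Prop.\ 6.3]{G} and is injective by Remark \ref{trivial-rem}. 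Since the stated map is $f(\phi,m,x)=\iota_{1}(\phi)\,\iota_{3}(x)\,\iota_{2}(m)$ and $\iota_{2}(\Z)$ is central, $f$ will be a homomorphism on the direct product $\Out(\La_n)\times\Z\times B_n$ as soon as $\iota_{1}(\Out(\La_n))$ and $\iota_{3}(B_n)$ commute elementwise; and for that it suffices to treat the generators $s_1,\dots,s_{n-1}$ of $B_n$.

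To see $\iota_{1}(\phi)\iota_{3}(s_i)=\iota_{3}(s_i)\iota_{1}(\phi)$ I would use that $\mathbb{T}_{s_i}$ is (up to shift) the two-sided complex of silting mutation of $\La_n$ at its $i$-th indecomposable projective, concretely the cone of the multiplication map $\La_n\ep_i\otimes_{\ep_i\La_n\ep_i}\ep_i\La_n\to\La_n$; in particular it is built functorially out of the pair $(\La_n,\ep_i)$. Every algebra automorphism of $\La_n$ fixes each $\ep_i$ up to conjugacy — the $\dim_K\ep_i\La_n$ are pairwise distinct, as in the proof of Proposition \ref{out-La} — so we may pick the representative $\phi$ with $\phi(\ep_i)=\ep_i$ for all $i$; such a $\phi$ then induces an automorphism of $\mathbb{T}_{s_i}$, realised on the two terms by $\phi^{-1}\otimes\phi^{-1}$ and by $\phi^{-1}$ and compatible with the multiplication. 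Hence ${}_{\phi}(\mathbb{T}_{s_i})_{\phi}\cong\mathbb{T}_{s_i}$ as complexes of $\La_n$-bimodules, equivalently ${}_{\phi^{-1}}\La_n\otimes^{L}\mathbb{T}_{s_i}\cong\mathbb{T}_{s_i}\otimes^{L}{}_{\phi^{-1}}\La_n$ in $\DPic(\La_n)$; as $\iota_{3}$ is a homomorphism, $\iota_{1}(\phi)$ then commutes with all of $\iota_{3}(B_n)$ and $f$ is a group homomorphism. (Alternatively one may cite that the braid action of \cite{G} commutes with the $\Out$-action.)

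It remains to see $f$ is bijective. Here I would use that, by \cite[Prop.\ 2.3]{RZ03}, the fibres of $F$ are exactly the left cosets of $\iota_{1}(\Out(\La_n))$, and that $F$ is unaffected by twisting the left action, so $F\bigl(({}_{\phi^{-1}}\La_n\otimes^{L}\mathbb{T}_x)[m]\bigr)=T_x[m]$. For surjectivity: given $\mathbb{T}\in\DPic(\La_n)$, Theorem \ref{main-thm} writes $F(\mathbb{T})$ uniquely as $T_x[m]\in\silt_{Q_n[m]}\La_n$ for some $m\in\Z$, $x\in B_n$; since $F(\mathbb{T}_x[m])=T_x[m]=F(\mathbb{T})$ there is $\psi\in\Out(\La_n)$ with $\mathbb{T}\cong{}_{\psi}\La_n\otimes^{L}\mathbb{T}_x[m]=f(\psi^{-1},m,x)$. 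For injectivity: if $f(\phi,m,x)\cong f(\psi,m',y)$, applying $F$ gives $T_x[m]\cong T_y[m']$ in $\tilt\La_n$, so $m=m'$ and then $x=y$ because the components $\silt_{Q_n[k]}\La_n$ are disjoint and $\rho$ is injective (Theorem \ref{main-thm}); cancelling the invertible element $\iota_{3}(x)\iota_{2}(m)$ in $\DPic(\La_n)$ leaves ${}_{\phi^{-1}}\La_n\cong{}_{\psi^{-1}}\La_n$, hence $\phi=\psi$ in $\Out(\La_n)$. With Proposition \ref{out-La} this also yields Theorem \ref{Thm-DPic}.

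The step I expect to be delicate is exactly the commutativity of $\iota_{1}$ and $\iota_{3}$; the rest is bookkeeping with $F$ and Theorem \ref{main-thm}. If the precise shape of $\mathbb{T}_{s_i}$ above is not directly available, the fallback is to first prove that the standard autoequivalence $\Theta_\phi=-\otimes^{L}{}_{\phi^{-1}}\La_n$ acts trivially on $\silt\La_n$ — it fixes $\La_n$ and every $Q_i$, commutes with irreducible left mutation, and by Lemma \ref{main-obs} each complex in $\silt_{Q_n}\La_n$ is an iterated irreducible left mutation of $\La_n$ — whence $\iota_{1}(\Out(\La_n))=\ker\bigl(\DPic(\La_n)\to\operatorname{Sym}(\silt\La_n)\bigr)$ is normal with quotient $\Z\times B_n$; upgrading this split extension to a direct product again comes down to the functoriality of $\mathbb{T}_{s_i}$ in $(\La_n,\ep_i)$ forcing the conjugation action $B_n\to\Aut(\Out(\La_n))$ to be trivial.
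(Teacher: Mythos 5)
Your proposal is correct and follows essentially the same route as the paper: the crux in both is the bimodule isomorphism ${}_{\phi}(\mathbb{T}_{s_i})_{\phi}\cong\mathbb{T}_{s_i}$ obtained by choosing a representative $\phi$ fixing all idempotents (the paper realizes the naturality you invoke concretely via $\phi(I_i)=I_i$ for the ideals $I_i=\La_n(1-\ep_i)\La_n$, rather than your cone description), followed by injectivity and surjectivity read off through the forgetful map $F$, the bijectivity of $\rho$, and \cite[Prop.~2.3]{RZ03}. The only cosmetic difference is that you package the homomorphism property as elementwise commutation of three subgroups checked on the generators $s_i$, whereas the paper proves ${}_{\phi}(\mathbb{T}_{x})_{\phi}\cong\mathbb{T}_{x}$ for all $x\in B_n$ by induction on the braid word.
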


To make the proof easier to understand, we collect some easy observations on bimodules. 
\begin{lem} Let $\La$ be a finite-dimensional algebra and $\phi \in \Out (\La )$. Here we set $\otimes:= \otimes_{\La}$, we have the following maps $f$ which are isomorphisms of $\La$-$\La$-bimodules:  
\begin{itemize}
\item[(0)] $f\colon {}_{\phi} \La_\La \otimes {}_{\phi'} \La_{\La} \to {}_{\phi' \phi}\La_{\La} $,   $x \otimes y \mapsto \phi' (x)y$.
    \item[(1)] $f\colon {}_{\phi} \La_\La \otimes {}_{\phi^{-1}} \La_{\La} \to {}_{\La}\La_{\La} $,   $x \otimes y \mapsto \phi^{-1} (x)y$.
    \item[(2)] $f\colon {}_{\phi} \La_{\La} \to {}_{\La} \La_{\phi^{-1}}$, $x \mapsto \phi^{-1} (x)$.
    \item[(3)] Assume $I\subset \La$ is a two-sided ideal and $\phi (I) = I$ and $f\colon {}_{\La}I_{\La} \to {}_{\phi} I_{\phi}  $, $x \mapsto \phi (x)$ 
\end{itemize}
\end{lem}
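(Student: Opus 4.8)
The plan is to fix once and for all the convention that for $K$-algebra automorphisms $\phi,\psi$ of $\La$ the twisted bimodule ${}_{\phi}\La_{\psi}$ denotes the vector space $\La$ equipped with $a\cdot x\cdot b=\phi(a)\,x\,\psi(b)$ (so a subscript $\La$ signals the untwisted action, $\La=\id$), and $\phi'$ is understood to be a second automorphism. With this fixed, each of (0)--(3) is verified by the same three-step routine: check that $f$ is well defined, check that it intertwines the two bimodule structures, and exhibit an explicit two-sided inverse. Since (1) is literally the special case $\phi'=\phi^{-1}$ of (0) (using $\phi^{-1}\phi=\id$ and hence target ${}_{\id}\La_{\La}={}_{\La}\La_{\La}$), only three independent verifications are needed.

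For (0) the single nontrivial point is well-definedness on the balanced tensor product. First I would record that in ${}_{\phi}\La_{\La}\otimes_{\La}{}_{\phi'}\La_{\La}$ the balancing relation reads $(xr)\otimes y=x\otimes(\phi'(r)y)$, because the right action on the first factor is untwisted while the left action on the second factor is twisted by $\phi'$. Applying $f(x\otimes y)=\phi'(x)y$ to both sides gives $\phi'(xr)y=\phi'(x)\phi'(r)y$ on one side and $\phi'(x)\phi'(r)y$ on the other, so $f$ descends. Next I would check the bimodule identity against the target ${}_{\phi'\phi}\La_{\La}$:
\[
f\bigl(a\cdot(x\otimes y)\cdot b\bigr)=f\bigl(\phi(a)x\otimes yb\bigr)=\phi'(\phi(a))\,\phi'(x)\,y\,b=a\cdot f(x\otimes y)\cdot b,
\]
using that $\phi'$ is a ring homomorphism and that the left action on the target is through $\phi'\phi$. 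Finally I would exhibit the inverse $z\mapsto\phi'^{-1}(z)\otimes 1$: one composite sends $z\mapsto\phi'(\phi'^{-1}(z))=z$, and the other sends $x\otimes y\mapsto(x\,\phi'^{-1}(y))\otimes 1=x\otimes y$, the last equality being the balancing relation again. This gives the isomorphism, and specializing $\phi'=\phi^{-1}$ yields (1).

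For (2) and (3) there is no tensor product, and the maps are literally $\phi^{-1}$ and $\phi$, which are $K$-linear bijections of $\La$ (and, in (3), of $I$, using $\phi(I)=I$ so that $x\mapsto\phi(x)$ restricts to a bijection $I\to I$). Here I would only match the twists. For (2), $f(a\cdot x\cdot b)=\phi^{-1}(\phi(a)\,x\,b)=a\,\phi^{-1}(x)\,\phi^{-1}(b)$, which is exactly the action $a\cdot\phi^{-1}(x)\cdot b$ in ${}_{\La}\La_{\phi^{-1}}$, and the inverse is $\phi$. For (3), $f(axb)=\phi(a)\,\phi(x)\,\phi(b)$, which is the action of $a,b$ on $\phi(x)$ in ${}_{\phi}I_{\phi}$, and the inverse is $\phi^{-1}$.

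I do not expect any genuine obstacle: every claim is a formal consequence of $\phi$ (and $\phi'$) being $K$-algebra automorphisms. The only place demanding care is bookkeeping, namely keeping the left/right twists and the direction of the balancing relation $(xr)\otimes y=x\otimes(\phi'(r)y)$ consistent throughout (0), and reading $\phi\in\Out(\La)$ through a chosen automorphism representative---different representatives differ by an inner automorphism and give canonically isomorphic twisted bimodules, so the stated isomorphisms hold at the level of $\Out(\La)$ as required in the applications.
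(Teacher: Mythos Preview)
Your proposal is correct and is exactly the routine verification the paper has in mind; the paper's own proof is simply ``This is clear.'' Your only addition is making the bookkeeping explicit (well-definedness on the balanced tensor, matching the twists, and writing down inverses), which is entirely in line with the intended argument.
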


\begin{proof} This is clear. 
\end{proof}
Now, recall that every $\phi \in H =\Out (\La_n )$ fulfills $\phi (e_i) =e_i$ for all $i$, this implies that every ideal $I_i= \La_n (1-e_i) \La_n$ fulfills $\phi (I_i) =I_i$. So clearly also all products and inverses of these ideals fulfill this. 
Also, we recall from \cite[section 3]{Y} that the following defines an injective group homomorphism for a finite-dimensional algebra $\La$ 
\[\Out (\La) \to \rm{Pic}(\La ) \subset \DPic (\La), \quad \phi \mapsto {}_{\phi^{-1}} \La_{\La}.
\]
where ${\rm Pic} (\La)$ is the group with objects isomorphism classes of invertible $\La$-bimodules with multiplication $\otimes=\otimes_{\La}$. 

\begin{lem} For every $\phi \in \Out (\La_n), x \in B_n$ 
we have ${}_{\phi}(\mathbb{T}_x)_{\phi}\cong\mathbb{T}_x $ in $D^b(\La_n^e\!-\!\modu )$. 
\end{lem}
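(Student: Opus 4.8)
For every $\phi \in \Out(\La_n)$ and $x \in B_n$ we have ${}_\phi(\mathbb{T}_x)_\phi \cong \mathbb{T}_x$ in $D^b(\La_n^e\text{-}\modu)$.

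The plan is to recognize the operation $\mathbb{M}\mapsto{}_{\phi}\mathbb{M}_{\phi}$ on $D^b(\La_n^e\!-\!\modu)$ as conjugation inside the group $\DPic(\La_n)$, and then to reduce the assertion to the $\phi$-invariance of the two-sided ideals $I_j=\La_n(1-\epsilon_j)\La_n$ recalled just before the statement. First I would use Proposition~\ref{out-La} to assume $\phi\in H$, so that $\phi$ fixes every primitive idempotent $\epsilon_i$ (and every $\alpha_j$); as already observed this gives $\phi(I_j)=I_j$, and hence $\phi$-invariance of every finite $\otimes_{\La_n}$-product of the $I_j$ and of their inverses in $\DPic(\La_n)$.

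Next I would carry out the bimodule bookkeeping. Using part~(2) of the lemma above (${}_{\phi}\La_n\cong{}_{\La_n}(\La_n)_{\phi^{-1}}$) together with the standard identities for moving a one-sided twist across $\otimes_{\La_n}$, and the injective homomorphism $\Out(\La_n)\hookrightarrow{\rm Pic}(\La_n)\subseteq\DPic(\La_n)$, $\phi\mapsto{}_{\phi^{-1}}\La_n$, I would establish a natural isomorphism
\[
{}_{\phi}\mathbb{M}_{\phi}\;\cong\;{}_{\phi}\La_n\otimes^{L}_{\La_n}\mathbb{M}\otimes^{L}_{\La_n}{}_{\phi^{-1}}\La_n
\]
in $D^b(\La_n^e\!-\!\modu)$, i.e. ${}_{\phi}(-)_{\phi}$ is conjugation by the class of $\phi$ in $\DPic(\La_n)$; in particular it is a group endomorphism of $\DPic(\La_n)$, hence compatible with $\otimes^{L}_{\La_n}$ and with passing to inverses. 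Then I would invoke \cite[\S6]{G}, where the braid action of $B_n$ on $\DPic(\La_n)$ is built: for every $x\in B_n$ the two-sided tilting complex $\mathbb{T}_x$ is isomorphic in $D^b(\La_n^e\!-\!\modu)$ to a finite $\otimes^{L}_{\La_n}$-product of the ideals $I_j$ and their inverses in $\DPic(\La_n)$. By the displayed compatibility it then suffices to check ${}_{\phi}(I_j)_{\phi}\cong I_j$ for each relevant $j$, and this is precisely part~(3) of the lemma above applied to the $\phi$-invariant ideal $I_j$: the map $y\mapsto\phi(y)$ is an isomorphism ${}_{\La_n}(I_j)_{\La_n}\to{}_{\phi}(I_j)_{\phi}$ of $\La_n$-bimodules, hence an isomorphism in $D^b(\La_n^e\!-\!\modu)$. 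Tensoring these together along the factors of $\mathbb{T}_x$ yields ${}_{\phi}(\mathbb{T}_x)_{\phi}\cong\mathbb{T}_x$.

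The step I expect to be the main obstacle is the identification of ${}_{\phi}(-)_{\phi}$ with conjugation by an element of $\DPic(\La_n)$: one must keep all the variances of the twists straight so that this operation is genuinely a monoidal self-equivalence (respecting $\otimes^{L}$ and inverses), not merely a self-bijection of the set of two-sided tilting complexes. A secondary point is quoting the description of $\mathbb{T}_x$ from \cite{G} in the form ``a product of the $I_j$ and their inverses''; once both of these are in place, the conclusion is immediate from part~(3) of the preceding lemma.
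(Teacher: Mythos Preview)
Your proposal is correct and follows essentially the same line as the paper's own proof: both rely on the $\phi$-invariance of the ideals $I_j$ recorded just before the lemma (hence ${}_{\phi}(\mathbb{T}_{s_j})_{\phi}\cong\mathbb{T}_{s_j}$ via part~(3) of the preceding lemma), together with the compatibility of the twist ${}_{\phi}(-)_{\phi}$ with $\otimes^{L}$, and then conclude for arbitrary $x\in B_n$ from the factorisation of $\mathbb{T}_x$ into generators and their inverses. The only cosmetic difference is that you package the compatibility step as ``${}_{\phi}(-)_{\phi}$ is conjugation in $\DPic(\La_n)$, hence a group automorphism'', whereas the paper verifies it by an explicit inductive computation, inserting ${}_{\La}\La_{\phi}\otimes^{L}{}_{\phi}\La_{\La}\cong\La$ between the two factors of $\mathbb{T}_{s_ix}\cong\mathbb{T}_{s_i}\otimes^{L}\mathbb{T}_x$; the underlying identities are the same.
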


\begin{proof}
We assume that $f=\phi \colon \mathbb{T}_x \to {}_{\phi} (\mathbb{T}_{x}) _{\phi} $ is an isomorphism of complexes of bimodules. Using the previous lemma 
\[ 
\begin{aligned}
{}_{\phi} (\mathbb{T}_{s_ix}) _{\phi} &\cong {}_{\phi} (\mathbb{T}_{s_i}) _{\La}\otimes^L {}_{\La} (\mathbb{T}_{x})_{\phi} \\
&\cong {}_{\phi} (\mathbb{T}_{s_i})_{\La}\otimes^L 
{}_{\La} \La_{\phi } \otimes^L  {}_{\phi } \La_{\La} \otimes^L 
{}_{\La} (\mathbb{T}_{x})_{\phi}\\
&\cong {}_{\phi} (\mathbb{T}_{s_i})_{\phi}\otimes^L {}_{\phi} (\mathbb{T}_{x})_{\phi} \cong \mathbb{T}_{s_i} \otimes^L \mathbb{T}_{x} \cong \mathbb{T}_{s_ix}
\end{aligned}
\]
Similarly, one shows ${}_{\phi}(\mathbb{T}_{s_i^{-1}x})_{\phi} \cong \mathbb{T}_{s_i^{-1}x}$. 
\end{proof}

Now, we can prove Proposition \ref{derived-Pic}, the proof is very similar to \cite[Theorem 4.4]{Mi17}.
\begin{proof}
1. First we show that $(\phi , m, x) \mapsto {}_{\phi^{-1}} \La_{\La} \otimes^L \mathbb{T}_x [m]$ is a group homomorphism. We have for $\phi, \phi' \in \Out (\La_n), x,x' \in B_n$
\[
\begin{aligned}
{}_{(\phi' \phi)^{-1}}\La\otimes^L \mathbb{T}_{x'x} &\cong {}_{\phi'^{-1}}\La \otimes^L {}_{\phi^{-1} } \La \otimes^L \mathbb{T}_{x'} \otimes^L \mathbb{T}_x \\
& \cong 
{}_{\phi'^{-1}}\La \otimes^L \La_{\phi} \otimes^L {}_{\phi}(\mathbb{T}_{x'})_{\phi} \otimes^L \mathbb{T}_x
\\
& \cong 
{}_{\phi'^{-1}}\La \otimes^L (\La_{\phi} \otimes^L {}_{\phi}\La) \otimes^L\mathbb{T}_{x'}\otimes^L \La_{\phi} \otimes^L \mathbb{T}_x \\
& \cong 
({}_{\phi'^{-1}}\La  \otimes^L\mathbb{T}_{x'})\otimes^L ({}_{\phi^{-1}}\La \otimes^L \mathbb{T}_x) 
\end{aligned}
\]
This shows that it is a group homomorphism.\\
2. Now, we show, the map is injective and surjective. 
Assume ${}_{\phi} (\La_n)_{\La_n} \otimes^L \mathbb{T}_x [m]=\La_n$ in $\DPic (\La_n)$. We apply the forgetful map and obtain $T_x[m]=(\La_n)_{\La_n}$ in $\tilt \La_n$, since the map $\rho$ is bijective, we have $m=0$ and $x=1\in B_n$. This implies ${}_{\phi}(\La_n)_{\La_n} \cong \La_n$ and therefore $\phi \in \Inn (\La_n )\cap \Out (\La_n)=\{\id \}$. This proves that the map is injective. Let $\mathbb{T} \in \DPic (\La_n )$. Then $\mathbb{T}_{\La}= T_x[m]\in \tilt \La_n$ for some $x \in B_n, m \in \mathbb{Z}$ since $\rho$ is bijective. By \cite[Prop. 2.3]{RZ03}, this implies that there is a $\phi \in \Out (\La_n)$ such that $\mathbb{T} = {}_{\phi} \La_n \otimes^L \mathbb{T}_x[m]$ in $\DPic (\La_n )$. This shows that the map is surjective. 
\end{proof}

Now for a finite-dimensional algebra $\La$, we use the following notation:\\
$2\!-\!\silt \La :=[\La[1], \La]$, $2\!-\!\tilt \La :=[\La [1], \La] \cap \tilt \La$ and $s\tau \!-\!\tilt \La$ denotes the set of isomorphism classes of basic support $\tau$-tilting $\La$-modules, cf. \cite{AIR} for the precise definition. We have the following corollary using the main result of \cite{IZ}. 
\begin{cor} \label{two-term}
%Iyama and Zhang's bijection \cite{IZ} and Adachi-Iyama Reiten's bijection \cite{AIR}: 
%\[  S_{n+1} \longleftrightarrow  s\tau\!-\!\tilt \La_n \longleftrightarrow 2\!-\!\silt \La_n \]
There are bijections  
\[ S_n \sqcup S_n \longleftrightarrow \{ T \in s\tau\!-\!\tilt \La_n \mid Q_n \in \add (T) \text{ or } \Hom(Q_n, T)=0\} \longleftrightarrow 2\!-\!\tilt \La_n .\]
In particular, the number of $2$-term tilting complexes for $\La_n$ is $2n!$. \end{cor}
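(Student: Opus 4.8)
The plan is to produce both bijections and then count. For the left-hand bijection, I would invoke the main result of \cite{IZ}, which establishes a bijection between $s\tau$-tilting $\La_n$-modules and $s\tau$-tilting $\Pi_n$-modules, together with the description of $s\tau$-tilting $\Pi_n$-modules via the Weyl group $W(\mathbb{A}_n) \cong S_{n+1}$; but more directly, I would use that the algebra $\La_n/(\ep_n) \cong \Pi_{n-1}$ and that imposing the condition ``$Q_n \in \add(T)$ or $\Hom(Q_n,T)=0$'' on a support $\tau$-tilting module amounts, via $\tau$-tilting reduction (Jasso, as used in \cite{AIR}) at the projective-injective summand $Q_n$, to choosing a support $\tau$-tilting module for $\La_n/(\ep_n)\cong \Pi_{n-1}$. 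By Mizuno's theorem \cite{Mi} the latter set is in bijection with $W(\mathbb{A}_{n-1})\cong S_n$. The two cases ``$Q_n\in\add(T)$'' and ``$\Hom(Q_n,T)=0$'' are disjoint and each contributes a copy of $S_n$, giving the disjoint union $S_n\sqcup S_n$ on the left.

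\textbf{The middle bijection.} For the bijection with $2\text{-}\tilt\La_n$, I would first recall the general bijection of \cite{AIR} between $2$-term silting complexes and support $\tau$-tilting modules for any finite-dimensional algebra, which restricts to a bijection $2\text{-}\silt\La_n \longleftrightarrow s\tau\text{-}\tilt\La_n$ given by taking $H^0$. Under this correspondence, a $2$-term silting complex $T$ is a tilting complex precisely when it is self-orthogonal in negative degrees as well, and by Corollary \ref{main-cor} (together with Theorem \ref{main-thm}) every tilting complex of $\La_n$ lies in some $\silt_{Q_n[m]}\La_n$; intersecting with the two-term interval $[\La_n[1],\La_n]$ forces $m\in\{0,1\}$, i.e. $Q_n$ or $Q_n[1]$ is a summand of $T$. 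On the support $\tau$-tilting side, $Q_n$ (resp. $Q_n[1]$) being a summand translates exactly into the condition ``$Q_n\in\add(H^0(T))$'' (resp. ``$\Hom(Q_n, H^0(T))=0$'', since $Q_n[1]$ a summand means $\ep_n$ kills the support $\tau$-tilting module). Thus $2\text{-}\tilt\La_n$ corresponds to precisely the indicated subset of $s\tau\text{-}\tilt\La_n$, matching the middle set in the statement. The counting conclusion $2n!$ is then immediate from $|S_n| = n!$.

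\textbf{Main obstacle.} The step I expect to require the most care is the identification, on the support $\tau$-tilting side, of the condition singling out the relevant subset with a $\tau$-tilting reduction at $Q_n$ landing in $\Pi_{n-1}$ — one must check that $Q_n$ is indeed the projective-injective summand whose $\tau$-tilting reduction quotient is $\La_n/(\ep_n)\cong\Pi_{n-1}$, and that Jasso's reduction bijection is compatible with the ``$Q_n\in\add$ vs.\ $\Hom(Q_n,-)=0$'' dichotomy (these two families being the preimages of $\Pi_{n-1}$ under reduction, paired up). The rest is bookkeeping: matching the $\tau$-tilting/silting correspondence of \cite{AIR} with the two-term-tilting condition, which is routine given Theorem \ref{main-thm}. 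Once these identifications are in place, the bijections and the count $2n!$ follow formally.
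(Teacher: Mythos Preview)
Your proposal is correct, and the right-hand bijection (middle set $\longleftrightarrow 2\text{-}\tilt\La_n$) is argued exactly as in the paper: Theorem~\ref{main-thm} forces any $T\in 2\text{-}\tilt\La_n$ to have $Q_n$ or $Q_n[1]$ as a summand, and the AIR bijection translates this into the stated condition on support $\tau$-tilting modules.

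For the left-hand bijection you take a different route from the paper. You reduce each of the two subsets to $s\tau\text{-}\tilt(\La_n/(\ep_n))=s\tau\text{-}\tilt\,\Pi_{n-1}$ via Jasso's $\tau$-tilting reduction (at $(Q_n,0)$ and at $(0,Q_n)$ respectively) and then invoke Mizuno's bijection $s\tau\text{-}\tilt\,\Pi_{n-1}\cong W(\mathbb{A}_{n-1})\cong S_n$. The paper instead quotes \cite[Theorem~1.1(3)]{IZ} directly for the half $\{T:Q_n\in\add(T)\}$ (identified there with tilting modules of projective dimension $\le 1$), and then uses the dagger duality of \cite[Theorem~2.14]{AIR} to carry this over to the half $\{T:\Hom(Q_n,T)=0\}$. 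Your approach is more uniform and makes the appearance of $\Pi_{n-1}$ transparent; the paper's is shorter since the IZ result is already tailored to $\La_n$. One clarification on your ``main obstacle'': the two families arise from \emph{two separate} reductions, not as two preimage classes of a single reduction as your phrasing suggests; once stated this way there is nothing delicate left to check.
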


\begin{proof}
By Theorem \ref{main-thm}, we have $2\!-\!\tilt \La_n = 2\!-\!\silt_{Q_n} \La_n \sqcup 2\!-\!\silt_{Q_n[1]} \La_n $, so under  the Adachi-Iyama-Reiten bijection \cite[Theorem 3.2]{AIR}, this corresponds to the following set 
$\{ T \in s\tau\!-\!\tilt \La_n \mid Q_n \in \add (T) \text{ or } \Hom(Q_n, T)=0\}$. 
The result \cite[Theorem 1.1, (3)]{IZ} gives bijections $S_n \longleftrightarrow  \{ T \in s\tau\!-\!\tilt \La_n \mid Q_n \in \add (T) \}\longleftrightarrow  2\!-\!\silt_{Q_n} \La_n $, where the middle is the set of tilting modules of $\pdim \leq 1$. Using dagger duality \cite[Theorem 2.14]{AIR} one easily gets a bijection $ \{ T \in s\tau\!-\!\tilt \La_n \mid  \Hom(Q_n, T)=0\} \longleftrightarrow \{ T \in s\tau\!-\!\tilt \La_n \mid Q_n \in \add (T)\}$. By composing this with the first IZ-bijection from before, this completes the proof. 
\end{proof}

%Now, let us look again at $e=e_J\in \Pi_{2n-1}, \La=\La_J:= e \Pi_{2n-1} e$ from the previous section. 
%Since the image of $\rho $ is the mutation component containing $\Pi_{2n-1}$ and $\Phi= \Phi_J$ in injective and commutes with mutation, we get an induced map 
%\[ \rho_J \colon B_n \to \tilt_{Q_n} \La_n\] 
%such that $\Phi_J\rho_J = \rho$.
%For the idempotent $e= \sum_{i=1}^n e_i$, i.e. $J=\{1, \ldots , n-1\}$, the map $\rho_J$ is defined in \cite{G}.  

%\begin{proof} 
%Recall that $e=e_J$ and set for the proof also $\La= \La_J$. 
%By Lemma \ref{main-obs}, we can see $\Bild \Phi_{J} =\tilt^{e_J-sym}\subset \tilt_{P_n} \Pi_{2n-1}$ as a $B_n$-orbit. 
%But now it is very easy to see that 
%\[\tilt_{P_n} \Pi_{2n-1} = \bigcup_{J \subset \{1, \ldots , n-1\}} \tilt^{e_J-sym}.\] 
%Since all the $B_n$-orbits $\tilt^{e_J-sym}$ contain $\Pi_{2n-1}$ they are equal and therefore they are all equal to $\tilt_{P_n} \Pi_{2n-1}$.  
%\end{proof}

%%%%%%%%%%%%%%%%%%%%%%%%%%
\section{Example}
We are looking at $\La_2$. Let $Q=Q_2=\begin{smallmatrix} 2\\1\\2 \end{smallmatrix}$ and $\beta \colon Q \to Q$ be the non-zero morphism factoring  over $S_2$. We define the following indecomposable objects in $\Ka^b(\La_2\!-\!\proj )$ 
for $n \geq 0$
\[
X_n = \underbrace{Q \xrightarrow{\beta} Q \xrightarrow{\beta} \cdots \xrightarrow{\beta} Q}_{n \text{ copies of  }Q} \to Q_1 \quad \quad 
Y_n = Q_1 \to \underbrace{Q \xrightarrow{\beta} Q \xrightarrow{\beta} \cdots \xrightarrow{\beta} Q}_{n \text{ copies of  }Q}
\]
where in $X_n, Y_n$ the module $Q_1$ sits in degree $0$. The mutation component of $\silt \Ka^b(\La_2\!-\!\proj )$ containing $\La_2$ is the following

\[
\xymatrix{
\vdots\ar[d]&&&&\\
X_2[\sm 2]\oplus Q\ar[d]\ar[r]&X_2[ \sm 2]\oplus X_3[ \sm 2]\ar@<.3em>[r]&X_2[\sm 2]\oplus X_3[\sm 1]\ar@<.3em>[r]\ar@<.3em>[l]|(0.48){[1]}
&X_2[\sm 2]\oplus X_{3}\ar@<.3em>[r]\ar@<.3em>[l]|(0.45){[1]}&\cdots\ar@<.3em>[l]|(0.35){[1]}\\
X_1[\sm 1]\oplus Q\ar[d]\ar[r]&X_1[ \sm 1]\oplus X_2[ \sm 1]\ar[lu]|{[1]}\ar@<.3em>[r]&X_1[ \sm 1]\oplus X_2\ar@<.3em>[r]\ar@<.3em>[l]|(0.45){[1]}
&X_1[ \sm 1]\oplus X_{2}[1]\ar@<.3em>[r]\ar@<.3em>[l]|{[1]}&\cdots\ar@<.3em>[l]|(0.28){[1]}\\
X_0\oplus Q\ar[d]\ar[r]&X_0\oplus X_1\ar[lu]|{[1]}\ar@<.3em>[r]&X_0\oplus X_1[1]\ar@<.3em>[r]\ar@<.3em>[l]|{[1]}
&X_0\oplus X_{1}[2]\ar@<.3em>[r]\ar@<.3em>[l]|{[1]}&\cdots\ar@<.3em>[l]|(0.4){[1]}\\
Y_1[1]\oplus Q\ar[d]\ar[r]&Y_1[1]\oplus X_0[1]\ar[lu]|{[1]}\ar@<.3em>[r]&Y_1[1]\oplus X_0[2]\ar@<.3em>[r]\ar@<.3em>[l]|{[1]}
&Y_1[1]\oplus X_{0}[3]\ar@<.3em>[r]\ar@<.3em>[l]|{[1]}&\cdots\ar@<.3em>[l]|(0.3){[1]}\\
Y_2[2]\oplus Q\ar[d]\ar[r]&Y_2[2]\oplus Y_1[2]\ar[lu]|{[1]}\ar@<.3em>[r]&Y_2[2]\oplus Y_1[3]\ar@<.3em>[r]\ar@<.3em>[l]|{[1]}
&Y_2[2]\oplus Y_{1}[4]\ar@<.3em>[r]\ar@<.3em>[l]|{[1]}&\cdots\ar@<.3em>[l]|(0.34){[1]}\\
\vdots&&&&
}
\]

\section*{Acknowledgement}
The author is supported by the Alexander von Humboldt Foundation in the framework of an Alexander von Humboldt Professorship endowed by the Federal Ministry of Education and Research. 
The author would like to thank William Crawley-Boevey, Jan Geuenich and Yuta Kimura for many helpful discussions.

\bibliographystyle{amsalpha}
\bibliography{main}
\end{document}